\documentclass[12pt]{amsart}

\oddsidemargin 3ex
\evensidemargin 3ex
\textheight 8.1in
\textwidth 6.0in

\font\bbbld=msbm10 scaled\magstephalf

\newcommand{\ba}{\bar{a}}

\newcommand{\bi}{\bar{i}}
\newcommand{\bj}{\bar{j}}
\newcommand{\bk}{\bar{k}}
\newcommand{\bl}{\bar{l}}
\newcommand{\bm}{\bar{m}}
\newcommand{\bn}{\bar{n}}
\newcommand{\bp}{\bar{p}}
\newcommand{\bq}{\bar{q}}

\newcommand{\bs}{\bar{s}}
\newcommand{\bt}{\bar{t}}

\newcommand{\bz}{\bar{z}}

\newcommand{\bM}{\bar{M}}

\newcommand{\bpartial}{\bar{\partial}}

\newcommand{\balpha}{\bar{\alpha}}
\newcommand{\bbeta}{\bar{\beta}}
\newcommand{\eeta}{\bar{\eta}}

\newcommand{\bxi}{\bar{\xi}}
\newcommand{\bzeta}{\bar{\zeta}}

\def \a{\alpha}

\def \p{\partial}
\def \f{\frac}

\def \l{\lambda}

\def \s{\sigma}

\newcommand{\fg}{\mathfrak{g}}

\newcommand{\fRe}{\mathfrak{Re}}

\newcommand{\bfC}{\hbox{\bbbld C}}

\newcommand{\bfR}{\hbox{\bbbld R}}

\newcommand{\cC}{\mathcal{C}}

\newcommand{\cL}{\mathcal{L}}

\newcommand{\cP}{\mathcal{P}}

\newcommand{\Ric}{\mbox{Ric}}
\newcommand{\tr}{\mbox{tr}}

\newcommand{\ol}{\overline}
\newcommand{\ul}{\underline}

\newtheorem{theorem}{Theorem}[section]
\newtheorem{lemma}[theorem]{Lemma}

\newtheorem{corollary}[theorem]{Corollary}

 \theoremstyle{definition}
\newtheorem{definition}[theorem]{Definition}
\newtheorem{example}[theorem]{Example}

\theoremstyle{remark}
\newtheorem{remark}[theorem]{Remark}

\numberwithin{equation}{section}



\begin{document}

\setlength{\baselineskip}{1.2\baselineskip}

\title[Fully Nonlinear Elliptic Equations on Hermitian Manifolds]
{
Second Order Estimates for Fully Nonlinear Elliptic Equations with
Gradient Terms on Hermitian Manifolds}
\author{Bo Guan}
\address{Department of Mathematics, Ohio State University,
         Columbus, OH 43210, USA}
\email{guan@math.ohio-state.edu}
\author{Xiaolan Nie}
\address{College of of Mathematics and Computer Science,
   Zhejiang Normal University,
   Jinhua, Zhejiang Province, 321004 China      }
\email{nie@zjnu.edu.cn} 
\thanks{Research of the first author was supported in part by NSF grants. The second author was partially supported by NSFC (Grant No. 11801516).}

\begin{abstract}
We derive {\em a priori} second order estimates for fully nonlinear elliptic equations which depend on the gradients of solutions in critical ways on Hermitian manifolds. The global estimates we obtained apply to an equation arising from a conjecture by Gauduchon which extends the Calabi conjecture; this was one of the original motivations to this work. We were also motivated by the fact that there had been increasing interests in fully nonlinear pde's from complex geometry in recent years, and aimed to develop general methods to cover as wide a class of equations as possible.

{\em Mathematical Subject Classification (MSC2020):}
35J15, 35J60, 58J05, 53C21, 53C55.

{\em Keywords:} Fully nonlinear elliptic equations, Hermitian manifolds;
 {\em a priori} estimates; concave functions and the associated tangent cones at infinity.
\end{abstract}

\maketitle

\bigskip

\section{Introduction}
\label{gn-I}
\setcounter{equation}{0}
\medskip

Let $(M^n, \omega)$ be a compact Hermitian manifold of
complex dimension $n \geq 2$, and $f$ a symmetric function of $n$ variables.
We consider fully nonlinear elliptic equations of the form
\begin{equation}
\label{3I-10}
f (\lambda (\sqrt{-1} \partial \bpartial u + \chi))= \psi \;\; \mbox{on $M$,}
\end{equation}
where $\chi$ is a real $(1,1)$ form which may depend on $u$ and its gradient,
and $\lambda (X) = (\lambda_1, \cdots, \lambda_n)$ denotes
the eigenvalues of a $(1,1)$ form $X$ with respect to the metric $\omega$.

The function $f$ is assumed to be defined in a symmetric
open and convex cone $\Gamma \subset \bfR^n$ with vertex
at the origin, 
\begin{equation}
\label{3I-15}
\Gamma_n = \{\lambda \in \bfR^n: \lambda_i > 0\} \subset\Gamma,
\end{equation}
and satisfies the following structure conditions
\begin{equation}
\label{3I-20}
f_i \equiv f_{\lambda_i} = \frac{\partial f}{\partial \lambda_i} > 0 \;\;
\mbox{in $\Gamma$}, \;\; 1 \leq i \leq n
\end{equation}
and 
\begin{equation}
\label{3I-30}
\mbox{$f$ is a concave function in $\Gamma$}.
\end{equation}

These conditions, first introduced by Caffarelli-Nirenberg-Spruck~\cite{CNS3}, have become standard in the theory of fully nonlinear PDEs.
According to \cite{CNS3}, equation~\eqref{3I-10} is elliptic by (\ref{3I-20})
for a function $u \in C^{2} (M)$ with
$\chi_u =  \chi + \sqrt{-1} \partial \bpartial u \in \Gamma$;
we call such functions
{\em admissible}.

We assume in addition
\begin{equation}
\label{3I-40}
 \sup_{\partial \Gamma} f < \psi < \sup_{\Gamma} f
\end{equation}
 where
\[ \sup_{\partial \Gamma} f \equiv \sup_{\lambda_0 \in \partial \Gamma}
                   \limsup_{\lambda \rightarrow \lambda_0} f (\lambda) \]
in order for equation~\eqref{3I-10} to remain non-degenerate.

Equation~\eqref{3I-10} covers some of the important elliptic equations in
complex geometry. In particular, it includes the complex
Monge-Amp\`ere equation which has received extensive study from different aspects, going back at least to the work of
Aubin~\cite{Aubin78} and Yau~\cite{Yau78} on compact K\"ahler manifolds in proof of Calabi conjectures, Caffarelli-Kohn-Nirenberg-Spruck~\cite{CKNS} on the Dirichlet problem in $\bfC^n$, and
Bedford-Taylor~\cite{BT76, BT82} on weak solutions and pluripontential theory;
see e.g. \cite{PSS12} for an excellent survey and references. In recent years, there have been increasing interests from complex geometry in more general fully nonlinear elliptic and parabolic equations, including those of form~\eqref{3I-10} in which $\chi$ and $\psi$ may depend on $u$ and/or its gradient. 
Exciting successes have been achieved in the study of some of these equations, such as the Fu-Yau equation~\cite{FY07, FY08} in which $\chi = \chi (z, u)$ and its extensions 
~\cite{PPZ16, PPZ17, PPZ21, CHZ19}, 
and an equation treated by Sz\'ekelyhidi-Tosatti-Weinkove~\cite{STW17} in connection
to a conjecture of Gauduchon which will be discussed in more details in Section~\ref{gn-G}. 
In \cite{PPZ3} and subsequent papers, 
Phong-Picard-Zhang introduced new geometric flows; see also \cite{Phong} and references therein for equations from geometry and physics.
In this paper we wish to treat equations in the general form~\eqref{3I-10} for $\chi = \chi (\cdot, \partial u, \bpartial u)$.

Typical examples of function $f$ satisfying conditions
\eqref{3I-20}-\eqref{3I-30} include $f = \sigma_k^{\frac{1}{k}}$
and more generally
$f = (\sigma_k/\sigma_l)^{\frac{1}{k-l}}$, $0 \leq l <  k \leq n$
($\sigma_0 = 1$) defined on the Garding cone
\[ \Gamma_k = \{\lambda \in \bfR^n: \sigma_j (\lambda) > 0, \;
\mbox{for $1 \leq j \leq k$}\}, \]
as well as $f = \sigma_k/\sigma_{k-1}$ on $\Gamma_{k-1}$ for $1 < k \leq n$,
where
\[ \sigma_k (\lambda) = \sum_{1 \leq i_1 < \cdots < i_k \leq n}
     \lambda_{i_1} \cdots \lambda_{i_k} \] 
is the  $k$-th elementary symmetric function. 
Another important example is 
\[ f (\lambda) = \sum \tan^{-1} \lambda_i \]
which, deeply rooted in calibration geometry of Harvey-Lawson~\cite{HL82}
and theory of special Lagrangian manifolds in the real case,  
corresponds to the deformed Hermitian-Yang-Mill equation; see~\cite{LYZ00} as well as e.g. \cite{JY17, {Collins-Jacob-Yau}, {Collins-Xie-Yau}, {Collins-Yau}, ChenG21, 
HJ21, HZZ21, Lin} for recent related results.

So far most of attentions to equation~\eqref{3I-10} from a more PDE point of view have been on the case where $\chi$ is independent of $u$ and its gradient.
In ~\cite{LiSY04} Li treated the Dirichlet problem in $\bfC^n$.
For $f = \sigma_k^{\frac{1}{k}}$ and $\chi = \omega$ on a compact K\"ahler manifold,
equation~\eqref{3I-10} was first studied by 
Hou-Ma-Wu~\cite{HMW10} who derived second order estimates of the form
\begin{equation}
\label{GN-I100}
\max_M |\partial \bpartial u| \leq C (1 + \max_M |\nabla u|^2),
\end{equation}
followed by Dinew-Kolodziej~\cite{DK} who used a blow-up argument and Liouville type theorem to derive a gradient bound from \eqref{GN-I100}.
In ~\cite{SW08},  Song-Weinkove introduced a necessary and sufficient cone condition to solve equation~\eqref{3I-10} for
$f = \sigma_n/\sigma_{n-1}$, known as J-equation, in answer to a question raised by Donaldson~\cite{Donaldson99a} in the K\"ahler case where $\chi$ is another K\"ahler metric on $M$ and $\psi$ is an invariant constant given by
\[ \psi = \frac{\int_M \chi^n}{\int_M \chi^{n-1} \wedge \omega}; \]
see also \cite{Weinkove04, Weinkove06}. Their results were extended to $f = (\sigma_n/\sigma_l)^{\frac{1}{n-l}}$,
$1 \leq l < n$ by Fang-Lai-Ma~\cite{FLM11} 
and by Sun~\cite{Sun16, Sun17} who treated general $\psi$.
From a point of view in analogue to Yau-Tian-Donaldson stability~\cite{Yau, Tian97, Donaldson02},
Lejmi-Sz\'ekelyhidi~\cite{LS15} proposed as a conjecture an integral condition for J-equation and solved it when $n = 2$. 
In higher dimensions, the conjecture was proved for toric manifolds by
Collins-Sz\'ekelyhidi~\cite{CS17} who also treated Krylov type equations, extending 
 results of ~\cite{FLM11} and Zheng~\cite{ZhengK15}, and by Song~\cite{SongJ} in the general case; see also Chen~\cite{ChenG21} and Datar-Pingali~\cite{DP}.

For equation~\eqref{3I-10} on compact Hermitian manifolds,
Zhang~\cite{ZhangDK17} and Sun~\cite{Sun17b} solved the cases
$f = \sigma_k^{\frac{1}{k}}$ and $f =  (\sigma_k/\sigma_l)^{\frac{1}{k-l}}$ respectively, and Sz\'ekelyhidi~\cite{Szekelyhidi} gave a thorough treatment
 for general $f$ and  $\psi = \psi (z)$, under the assumption
 in addition to \eqref{3I-20}-\eqref{3I-40},
\begin{equation}
\label{GN-I105}
\sum f_i \lambda_i  \geq 0, \;\; \sum f_i \geq c_0 > 0.
\end{equation}

A crucial ingredient in solving equation~\eqref{3I-10} is to establish {\em a priori}  bounds for second derivatives of admissible solutions, on which we shall focus 
in this paper. 
Our main results will be stated in Section~\ref{gn-R}; see Theorems~\ref{gj-th4}, 
\ref{3I-th4} and \ref{3I-th40}. We obtain in Theorems~\ref{gj-th4} an interior estimates for second derivatives 
under an assumption on $\chi$ (see \eqref{A3})
analogous to Ma-Trudinger-Wang condition~\cite{MTW05} in optimal transport theory.
Theorems~\ref{3I-th4} and \ref{3I-th40} concern global second order estimates in which
the primary assumptions are in terms of the tangent cone at infinity introduced in 
\cite{Guan14} for the level hypersurface $\{f = \psi\}$ which is smooth and convex by conditions \eqref{3I-20} and \eqref{3I-30}. 
In the case $\chi = \chi (z)$, i.e. when it is independent of $\partial u$ and $\bpartial u$, Theorem~\ref{3I-th4} recovers the second order estimates of Sz\'ekelyhidi~\cite{Szekelyhidi} without assumption~\eqref{GN-I105} for type I cones
(as defined in Caffarelli-Nirenberg-Spruck~\cite{CNS3}); for details see Section~~\ref{gn-R}.

In what follows we describe an application of Theorem~\ref{3I-th4}
which was one of the original motivations to our work in the current article.

Let $\omega_0$ be another Hermitian metric on $M^n$.  We consider the
 Monge-Amp\`ere form-type equation
\begin{equation}
\label{CH-I10}
\det \Phi_u = e^{h + b} \det (\omega^{n-1})
\;\; \mbox{in $M$}
\end{equation}
with
\begin{equation}
\label{CH-I15}
\Phi_u = \omega_0^{n-1}  + \sqrt{-1}\partial \bpartial u \wedge \omega^{n-2}
    +  c \, \fRe \{\sqrt{-1}\partial u \wedge \bpartial \omega^{n-2}\} > 0
\end{equation}
where $h$ is a given function, and $b, c$ are constant.

Equation~\eqref{CH-I10} is related to the general notion of plurisubharmonic functions of Harvey-Lawson~\cite{HL11, HL12}. 
It was studied by Fu-Wang-Wu~\cite{FWW11, FWW15},
Tosatti-Weinkove~\cite{TWv13a, TWv13b}  for $c = 0$,
and by Sz\'ekelyhidi-Tosatti-Weinkove~\cite{STW17} for $c = 1$, proving
a conjecture of Gauduchon~\cite{Gauduchon84} (see also Conjecture 1.3 in \cite{TWv13b}).
As a consequence of Theorem~\ref{3I-th4} we obtain

\begin{corollary}
\label{GN-cor-I10}
Let $u \in C^4 (M)$ be a solution of equation~\eqref{CH-I10} satisfying \eqref{CH-I15}.
Then \eqref{GN-I100} holds for some constant $C$ depending on $|u|_{C^0 (M)}$ and known data.
\end{corollary}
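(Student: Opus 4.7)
The plan is to reduce equation~\eqref{CH-I10} to the standard form~\eqref{3I-10} on $(M,\omega)$ and then apply the global second-order estimate of Theorem~\ref{3I-th4}. For the reduction I would use the fact that for $n\ge 2$ the linear map $A \mapsto A \wedge \omega^{n-2}$ from Hermitian $(1,1)$-forms to real $(n-1,n-1)$-forms is a bijection: if $A$ is $\omega$-diagonalizable with eigenvalues $a_1,\ldots,a_n$, then the $\omega$-eigenvalues of $A\wedge\omega^{n-2}$ as an $(n-1,n-1)$-form are proportional to $\sigma_1(a)-a_i$. Inverting this bijection on $\Phi_u$ produces a Hermitian $(1,1)$-form $A = \chi(z,\partial u,\bpartial u) + \sqrt{-1}\partial\bpartial u$, where the background part $\chi$ absorbs $\omega_0^{n-1}$ and the drift term $c\,\fRe\{\sqrt{-1}\partial u \wedge \bpartial \omega^{n-2}\}$; in particular $\chi$ depends on $\partial u,\bpartial u$ only through a linear expression whose coefficients are smooth functions of $z$. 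Equation~\eqref{CH-I10} then becomes $f(\lambda(\chi+\sqrt{-1}\partial\bpartial u)) = \tilde\psi$ for a smooth bounded $\tilde\psi$, with
$$f(\lambda) := \sum_{i=1}^n \log\bigl(\sigma_1(\lambda) - \lambda_i\bigr), \qquad \Gamma := \{\lambda \in \bfR^n : \sigma_1(\lambda) - \lambda_i > 0, \; 1 \leq i \leq n\},$$
and the positivity~\eqref{CH-I15} is exactly the admissibility $\lambda(\chi+\sqrt{-1}\partial\bpartial u) \in \Gamma$.

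Next I would verify the structural hypotheses on $f$. The cone $\Gamma$ is symmetric, open, convex, and contains $\Gamma_n$, giving~\eqref{3I-15}. Writing $\mu_i = \sigma_1(\lambda)-\lambda_i$, the map $\lambda\mapsto\mu$ is a linear bijection from $\Gamma$ onto the positive orthant under which $f$ is the concave function $\sum\log\mu_i$, so~\eqref{3I-30} holds; ellipticity~\eqref{3I-20} is immediate from $f_i = \sum_{j\ne i}\mu_j^{-1}>0$. Since $f\to-\infty$ on $\partial\Gamma$ and $f\to+\infty$ in the interior, condition~\eqref{3I-40} holds for $\tilde\psi$. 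Finally, because the linear change of variables $\lambda\mapsto\mu$ conjugates $f$ to the standard log Monge-Amp\`ere functional on the positive orthant, the smooth convex hypersurface $\{f=\psi\}$ has the same asymptotic geometry as in the complex Monge-Amp\`ere case, and the tangent-cone-at-infinity hypothesis of Theorem~\ref{3I-th4} reduces to the Monge-Amp\`ere case originally treated in~\cite{Guan14}.

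The main obstacle I anticipate is verifying that the specific $\chi$ produced by the reduction satisfies the structural hypothesis on the gradient dependence required by Theorem~\ref{3I-th4}. Because $\chi(z,\partial u,\bpartial u)$ depends linearly on $(\partial u,\bpartial u)$ with smooth coefficients determined by $\omega$, $\omega_0$, and the torsion of $\omega$, computing its first and second derivatives in the gradient arguments is a routine but bookkeeping-heavy exercise; the delicate part is to match the resulting pointwise inequality against the precise form of the $\chi$-hypothesis in Theorem~\ref{3I-th4} (the analogue of the Ma-Trudinger-Wang condition~\eqref{A3}) and to keep track of the sign and size of the contribution from the torsion-type term $c\,\fRe\{\sqrt{-1}\partial u \wedge \bpartial \omega^{n-2}\}$. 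Once this compatibility is in place, Theorem~\ref{3I-th4} applies and yields $\max_M|\partial\bpartial u| \le C(1+\max_M|\nabla u|^2)$ with $C$ depending on $|u|_{C^0(M)}$ and the data, which is~\eqref{GN-I100}.
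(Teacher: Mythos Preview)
Your reduction to an equation of the form~\eqref{3I-10} with $f=\log\rho_{n-1}$ on the cone $\mathcal{P}_{n-1}$ is correct and matches the paper's approach (via the Hodge star, following Tosatti--Weinkove). The observation that $\chi$ depends only \emph{linearly} on $(\partial u,\bpartial u)$ is also correct and immediately gives hypothesis~\eqref{A2}.

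The genuine gap is that you have misidentified which structural condition on $\chi$ must be verified. Theorem~\ref{3I-th4} does \emph{not} require the Ma--Trudinger--Wang type condition~\eqref{A3}; that condition belongs to Theorem~\ref{gj-th4} (interior estimates). In fact, since your $\chi$ is linear in the gradient, all second gradient-derivatives $\chi_{i\bj,\zeta_k\bzeta_l}$ vanish, so the strict negativity in~\eqref{A3} fails outright; pursuing that route would dead-end. The hypotheses you actually need for Theorem~\ref{3I-th4} are \eqref{A2}, \eqref{A5}, and the subsolution condition~\eqref{3I-200}. You do not mention the last two at all.

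The substantive work in the paper's proof lies precisely in these two points. For~\eqref{3I-200} one takes $\underline u=0$, which works because $\chi_{\underline u}>0$ and, crucially, because $\widetilde{\mathcal C}^+_\sigma=\mathcal P_{n-1}$ for $f=\log\rho_{n-1}$. For~\eqref{A5}, the key structural fact is that the rank $r_0$ equals $n-1$ (Lemma~\ref{gn-lemma-R20}), so~\eqref{A5} need only be checked for $\alpha=1$. The paper then computes the torsion term $\widetilde E_{i\bj}$ explicitly and exploits the specific vanishing $\widetilde\chi_{i\bj,\zeta_j}=0$ and $\widetilde\chi_{1\bar 1\bar 1,\zeta_1}=0$ (equation~\eqref{gn-G60}) together with the formula $F^{i\bi}=\sum_{j\neq i}\eta_j^{-1}$ to obtain $\big|\sum F^{i\bi}\chi_{i\bi\bar 1,\zeta_1}\big|\le C F^{1\bar 1}$, which is~\eqref{A5}. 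This is not generic bookkeeping: it relies on an algebraic cancellation that kills the $j=1$ term in~\eqref{gn-G80}, without which the sum would contain $1/\eta_1$ and the estimate would not close. Your proposal does not locate this mechanism.
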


It was proved by Tosatti-Weinkove~\cite{TWv13b} that the solvability of
equation~\eqref{CH-I10} reduces to establishing \eqref{GN-I100}; see also
Sz\'ekelyhidi~\cite{Szekelyhidi}. In their joint paper \cite{STW17}, these authors
carried out such an estimate.

Among other examples of $f$ satisfying \eqref{3I-20}-\eqref{3I-30}
are $f = \log \rho_k$, $1 \leq k \leq n$, where
\[ \rho_k (\lambda) := \prod_{1 \leq i_1 < \cdots < i_k \leq n}
(\lambda_{i_1} + \cdots + \lambda_{i_k})  \]
defined in the cone
\[ \mathcal{P}_k : = \{\lambda \in \bfR^n:
      \lambda_{i_1} + \cdots + \lambda_{i_k} > 0,
         \; \forall \; 1 \leq i_1 < \cdots < i_k \leq n \}. \]
In particular,  $\rho_1 = \sigma_n$ and $\rho_n = \sigma_1$.

We note that in the literature of nonlinear PDE, a domain $\Omega \in \bfR^n$ or its boundary
$\partial \Omega$ is often called
$k$-convex or $k$-mean convex if $\kappa \in \Gamma_k$ where
$\kappa = (\kappa_1, \ldots, \kappa_{n-1})$ denotes the principal curvatures of
$\partial \Omega$, while in areas of geometry/geometric analysis such as mean curvature flows, that a hypersurface $\Sigma$ in $\bfR^n$ is $k$-convex sometimes means that its principal curvatures $\kappa \in \mathcal{P}_k$. The latter concept was first introduced  in 1986 by Sha~\cite{Sha86}. In \cite{HS09} and \cite{BH17}, Brandle, Huisken and Sinestrari studied regularity of mean curvature flow of two-convex hypersurfaces in $\bfR^n$.

Using the Hodge star operator, Tosatti-Weinkove~\cite{TWv13a} were able to rewrite
equation~\eqref{CH-I10} in the form of \eqref{3I-10}
for $f = \log \rho_{n-1}$. We shall derive Corollary~\ref{GN-cor-I10}
by verifying the conditions in Theorem~\ref{3I-th4} for this equation.

In~\cite{PPZ16b}, Phong-Picard-Zhang 
considered equation~\eqref{3I-10} in which $\psi = \psi (z, u, \nabla u)$ for 
$f = \sigma_k^{\frac{1}{k}}$ and $\chi = \chi (z, u) \geq c_0 \omega > 0$ on a compact K\"ahler manifold $(M^n, \omega)$, and derived second order estimates which allow
arbitrary dependence of $\psi$ on $\nabla u$, in the spirit of 
 Guan-Ren-Wang~\cite{GRW15}. We obtain similar results under assumption~\eqref{A3}; see Remarks~\ref{gn-remark-I10} and \ref{gn-remark-I20}.

The Dirichlet problem for equation~\eqref{3I-10} on Hermitian manifolds was treated for
$f = (\sigma_n/\sigma_l)^{\frac{1}{n-l}}$, $1 \leq l < n$ by 
Li, Sun and the first author~\cite{GL13, GS15} where gradient estimates were derived directly using the maximum principle method, and more recently by Collins-Picard~\cite{CP} for 
$f = \sigma_k^{\frac{1}{k}}$, $1 < k < n$ and Feng-Ge-Zheng~\cite{FHZ} for Hessian quotient equations. See also \cite{GQY19, QY17, Yuan18} where equations with gradient dependence were studied.
For general $f$ it seems still open, while the corresponding Dirichlet problem on Riemannian manifolds was studied in \cite{Guan} for $\chi = \chi (z)$, $\psi = \psi (z)$ and \cite{GJ15, GJ16} in the more general case $\chi = \chi (z, u, \nabla u)$, 
$\psi = \psi (z, u, \nabla u)$ .

The article is organized as follows.
Section~\ref{gn-R} contains the statements of our main theorems. 
In Section~\ref{gn-P} we prove a key inequality needed in the proof of 
Theorem~\ref{3I-th40} for $f$, extending a result in \cite{Guan14}.  
Section~\ref{gn-C2}  is devoted to second derivative estimates, completing the proof of the main results while in Section~\ref{gn-G} we prove 
Corollary~\ref{GN-cor-I10}.

Part of research described in this paper was done while the second author was a Ross Assistant Professor in Department of Mathematics at The Ohio State University. 
More recently we were able to refine some of the calculations, so hopefully it was more accessible to the reader and especially those new to the field. We also up-datetd the references, adding papers appeared more recently in the field.
We wish to thank Gabor Sz\'ekelyhidi, Valentino Tosatti and Ben Weinkove for communications on their results in \cite{STW17}.

\bigskip

\section{Notations and Main Results}
\label{gn-R}
\setcounter{equation}{0}
\medskip

Throughout the paper we write in local coordinates $(z_1, \ldots, z_n)$
\[ \omega = \sqrt{-1} g_{i\bj} \, dz_i \wedge d \bz_j. \]
Let $\{g^{i\bj}\} = \{g_{i\bj}\}^{-1}$ denote the inverse matrix of 
 $\{g_{i\bj}\} > 0$.

For fixed $z \in M$, $h \in \bfR$ and $(1, 0)$ form $\zeta$, 
we use the notation
\[ \chi (z, \zeta,  \bzeta, h) = \sqrt{-1} \chi_{i\bj} (z, \zeta,  \bzeta, h) dz_i \wedge d \bz_j \]
or simply $\chi = \sqrt{-1} \chi_{i\bj} dz_i \wedge d \bz_j$,
and
\[ \chi_{\xi \eeta} = \sum \chi_{i\bj} \xi_i \bar \eta_j \]
for
\[ \xi = \sum \xi_i \frac{\partial}{\partial z_i}, \;
    \eta = \sum \eta_i \frac{\partial}{\partial z_i}. \] 


For a function $u \in C^{2} (M)$, we write
$\chi [u] = \chi (\cdot, \nabla u, u)$ or
$\chi [u] = \chi (\cdot, \partial u, \bpartial u, u)$
to indicate the dependence of $\chi$ on $u$ and its gradient,
and 
\[ \fg_{i\bj} =  \chi_{i\bj} [u] + \nabla_{\bj} \nabla_i u
= \chi_{i\bj} [u] + \partial_{\bj} \partial_i u \]
so
\[ \begin{aligned}
  \chi_u := \chi [u] + \sqrt{-1} \partial \bpartial u = \,& \sqrt{-1} \fg_{i\bj} \, dz_i \wedge d \bz_j.
\end{aligned} \]

In the current paper we shall consider the case $\psi = \psi (z)$, $\chi [u] = \chi (z, \partial u, \bpartial u)$,
and use the following expressions to distinguish different derivatives
 \[   \chi_{i\bj ,k} = \frac{\p \chi_{i\bj}}{\p z_k}, \;
 \chi_{i\bj k} = \chi_{i\bj, k} - \Gamma_{ki}^p \chi_{p\bj}, \;
   \chi_{i\bj k \bl} = \chi_{i\bj k, \bl} - \ol{\Gamma_{lj}^q} \chi_{i\bq k}, \]
\[      \chi_{i\bj, \zeta_{\a}} = \frac{\p \chi_{i\bj}}{\p\zeta_{\a}}, \;
   \chi_{i\bj, \zeta_{\a} k} = \frac{\p^2\chi_{i\bj}}{\p z_k \p \zeta_{\a}},  \;
       \chi_{i\bj k,  \zeta_{\a}} = \frac{\p \chi_{i\bj k}}{\p \zeta_{\a}},  \]
as well as
\begin{equation}
\label{gn-R50}
\partial_k {\chi}_{i\bj} [u]
    = \chi_{i\bj, k}  [u] + \chi_{i\bj, \zeta_{\alpha}}  [u] \p_k \p_{\alpha } u
    + \chi_{i\bj, \bzeta_{\alpha}}  [u] \p_k \p_{\balpha} u
\end{equation}
and
\begin{equation}
\label{gn-R60}
\begin{aligned}
   \nabla_k {\chi}_{i\bj}  [u]
   = \,& \partial_k {\chi}_{i\bj}  [u] - \Gamma_{ki}^p \chi_{p\bj}  [u]
 = \chi_{i\bj k}  [u] + \chi_{i\bj, \zeta_{\alpha}}  [u] \p_k \p_{\alpha } u
    + \chi_{i\bj, \bzeta_{\alpha}}  [u] \p_k \p_{\balpha} u, 
     \end{aligned}
\end{equation}
etc;
we shall drop $[u]$ in the expressions when no confusions would arise.
Similarly,
\begin{equation}
\label{gn-R70}
\nabla_{\bl} \chi_{i\bj k}
    = \chi_{i\bj k \bl}  + \chi_{i\bj k, \zeta_{\alpha}}   \p_{\bl} \p_{\alpha} u
       + \chi_{i\bj k, \bzeta_{\alpha}}  \p_{\bl} \p_{\balpha } u
\end{equation}
and
\begin{equation}
\label{gn-R80}
\begin{aligned}
   \nabla_{\bl}  \chi_{i\bj, \zeta_{\alpha}}
   = \,& \partial_{\bl} \chi_{i\bj, \zeta_{\alpha}}
      + \ol{\Gamma}_{lj}^q \chi_{i\bq, \zeta_{\alpha}} \\
 =  \,& \chi_{i\bj, \zeta_{\alpha} \bl}
 + \chi_{i\bj, \zeta_{\alpha} \zeta_{\beta}} \p_{\bl} \p_{\beta} u
   + \chi_{i\bj, \zeta_{\alpha} \zeta_{\bbeta}} \p_{\bl} \p_{\bbeta} u
   + \ol{\Gamma}_{lj}^q \chi_{i\bq, \zeta_{\alpha}}\\
    =  \,& \chi_{i\bj, \bl \zeta_{\alpha} }
 + \chi_{i\bj, \zeta_{\alpha} \zeta_{\beta}} \p_{\bl} \p_{\beta} u
   + \chi_{i\bj, \zeta_{\alpha} \zeta_{\bbeta}} \p_{\bl} \p_{\bbeta} u
   + \ol{\Gamma}_{lj}^q \chi_{i\bq, \zeta_{\alpha}}\\
   =  \,& \chi_{i\bj\bl, \zeta_{\alpha} }
 + \chi_{i\bj, \zeta_{\alpha} \zeta_{\beta}} \p_{\bl} \p_{\beta} u
   + \chi_{i\bj, \zeta_{\alpha} \zeta_{\bbeta}} \p_{\bl} \p_{\bbeta} u.
     \end{aligned}
\end{equation}

\begin{example}
We explain the above notation by a simple example. Let 
\[ \chi [u] = \partial u \wedge \bpartial u = \partial_i u \bpartial_j u \, dz_i \wedge d\bz_j. \]
Then 
\[   \chi_{i\bj ,k} = 0, \;\; \chi_{i\bj k} = - \Gamma_{ki}^p \partial_p u \bpartial_j u, \;\;
     \chi_{i\bj, \zeta_{\a}} = \delta_{i \alpha} \bpartial_j u, \;\; 
         \chi_{i\bj, \bzeta_{\a}} = \delta_{j \alpha} \partial_i u,  \]
and 
\[ \nabla_k {\chi}_{i\bj}  [u] = - \Gamma_{ki}^p \partial_p u \bpartial_j u
+ \p_k \p_i u \bpartial_j u + \partial_i u \p_k \bpartial_{j} u, 
    =  \nabla_k \nabla_i u \nabla_{\bj} u +\partial_i u \p_k \bpartial_{j} u. \]
\end{example}

In our first result we establish an interior estimate for second derivatives,
which requires the additional assumption that there exists $c_0 > 0$ such that
\begin{equation}
\label{A3}
\sum  \chi_{i\bj, \zeta_k \bar{\zeta}_l} (z, \cdot, \cdot)  
    \xi_i \xi_{\bj} \eta_k \bar{\eta}_l
\leq - c_0 |\xi|^{2} |\eta|^{2}, \;
\forall \, \xi, \eta \in T_z^{1,0} M, \; \omega (\xi, \bar\eta) = 0.
\end{equation}
This is an analogue of assumption (A3) of Ma-Trudinger-Wang~\cite{MTW05}.

\begin{theorem}
\label{gj-th4}
Let $u \in C^{4, \alpha} (B_{R})$ be an admissible solution of equation~\eqref{3I-10} in a geodesic ball $B_{R} \subset M$ of radius $R$, where $0 < \alpha < 1$.
Assume $\psi = \psi (z)$ and that \eqref{3I-20}-\eqref{3I-40}, 
\eqref{A3} holds. Then $u$ satisfies
the interior a priori estimate
\begin{equation}
\label{gj-G185}
|\nabla^2 u|_{C^{\alpha} (B_{R/2})}  \leq C
\end{equation}
where $C$ depends on $c_0$, $R^{-1}$, $|\nabla u|_{C^1 (\ol{B}_{3R/4})}$ and
\[ 
   c_1 = \sup_{\Gamma} f - \sup_{B_{R}} \psi > 0. \]
\end{theorem}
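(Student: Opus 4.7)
The plan has two steps: derive an interior pointwise bound $\sup_{B_{R/2}} |\nabla^{2} u| \le C$, then upgrade it to the Hölder estimate \eqref{gj-G185} by a local complex Evans--Krylov theorem applied to the concave operator $F(U) := f(\lambda(U))$. Once $|\nabla^{2} u|$ is bounded the equation is uniformly elliptic on admissible solutions, and the gradient dependence in $\chi$ is then harmless, being absorbable into the coefficients and inhomogeneity through $|\nabla u| \le C$ on $B_{3R/4}$. The essential use of hypothesis \eqref{A3} lies entirely in the pointwise bound.

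For the pointwise bound I would apply the maximum principle to
\[ W = \eta^{2}(x)\,\lambda_{1}(\mathfrak{g}(x))\,\exp\bigl(\phi(|\nabla u|^{2})\bigr), \]
where $\eta$ is a cutoff with $\eta \equiv 1$ on $B_{R/2}$ and $\mathrm{supp}\,\eta \subset B_{3R/4}$, $\lambda_{1}(\mathfrak{g})$ is the largest eigenvalue of $\mathfrak{g}_{i\bj} = \chi_{i\bj}[u] + \p_{i}\p_{\bj} u$, and $\phi$ is an increasing convex function such as $\phi(t) = -A\log\bigl(1 - t/K\bigr)$ with $K = 2\sup_{B_{3R/4}}|\nabla u|^{2}$ and $A$ large. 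At an interior maximum $x_{0}$ I would choose a unitary frame diagonalising $\mathfrak{g}_{i\bj}(x_{0})$, perturb to handle the possible multiplicity of $\lambda_{1}$, and test the first- and second-order conditions $\nabla_{i}\log W(x_{0}) = 0$ and $L(\log W)(x_{0}) \le 0$ with the linearised operator $L = \sum F^{i\bj}\nabla_{i}\nabla_{\bj}$.

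Differentiating \eqref{3I-10} in $e_{1}$ and then in $\bar e_{1}$ gives the standard identities
\[ \sum F^{i\bj}\mathfrak{g}_{i\bj,1} = \psi_{1}, \qquad
   \sum F^{i\bj}\mathfrak{g}_{i\bj,1\bar 1} + \sum F^{i\bj,k\bl}\mathfrak{g}_{i\bj,1}\mathfrak{g}_{k\bl,\bar 1} = \psi_{1\bar 1}, \]
and commuting $\mathfrak{g}_{i\bj,1\bar 1}$ into $\mathfrak{g}_{1\bar 1,i\bj}$ produces curvature and torsion terms of the Chern connection together with, via \eqref{gn-R80}, quadratic-in-Hessian terms of the form $\chi_{i\bj,\zeta_{\alpha}\bzeta_{\beta}}\,\p_{1}\p_{\alpha}u\,\p_{\bar 1}\p_{\bbeta}u$. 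When substituted into $L(\log W)(x_{0}) \le 0$, the troublesome positive contributions fall into three groups: (i) third-derivative terms $\sum F^{i\bi}|\mathfrak{g}_{i\bar 1,1}|^{2}/\lambda_{1}$ from the second derivatives of $\log\lambda_{1}$; (ii) quadratic-Hessian terms from the gradient dependence of $\chi$ just described; and (iii) undifferentiated errors of size $O(\lambda_{1}\sum F^{i\bi}) + O(\sum F^{i\bi}) + O(R^{-2}\sum F^{i\bi})$. Group (i) is absorbed by the concavity term $-\sum F^{i\bj,k\bl}\mathfrak{g}_{i\bj,1}\mathfrak{g}_{k\bl,\bar 1} \ge 0$ via the Andrews--Gerhardt inequality and the usual index split (Hou--Ma--Wu style), and group (iii) is dominated by the $\phi'(|\nabla u|^{2})L(|\nabla u|^{2})$ and $L(\eta^{2})$ contributions once $A$ is taken large.

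The crux, and the expected main obstacle, is group (ii). Applying \eqref{A3} at $x_{0}$ with $\xi = e_{i}$ and the vector $\eta$ whose $\alpha$-th component is $\p_{1}\p_{\alpha}u$, after decomposing $\eta$ into its component along $e_{i}$ and the perpendicular piece, produces the strictly negative contribution
\[ -\frac{c_{0}}{\lambda_{1}}\sum_{i}F^{i\bi}\sum_{\alpha \ne i}|\p_{1}\p_{\alpha}u|^{2}, \]
which swallows the off-diagonal part of group (ii). The diagonal components $|\p_{1}\p_{i}u|^{2}$, which are not controlled by \eqref{A3}, are absorbed by the good positive term $\phi'(|\nabla u|^{2})\sum F^{i\bi}|\p_{1}\p_{i}u|^{2}$ coming from $L\phi(|\nabla u|^{2})$; this is precisely why the exponential gradient factor is included in the test function. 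Collecting everything one obtains $\eta^{2}\lambda_{1}(x_{0}) \le C$, hence $\lambda_{1} \le C$ on $B_{R/2}$, and by admissibility this gives the desired pointwise bound on $|\nabla^{2}u|$. The Hölder estimate \eqref{gj-G185} then follows from the local complex Evans--Krylov theorem on coordinate balls together with standard interpolation.
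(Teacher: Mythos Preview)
Your overall strategy matches the paper's, but there is a genuine gap in how you dispose of the group (iii) errors of order $\lambda_1\sum_i F^{i\bi}$. You propose to absorb them by taking $A$ large in the gradient weight. This does not work: the useful positive contribution from $\phi' L(|\nabla u|^2)$ is of the form $\phi'\sum_i F^{i\bi}\bigl(\fg_{i\bi}^2+\sum_k|u_{ik}|^2\bigr)$, and $\sum_i F^{i\bi}\fg_{i\bi}^2$ need not dominate $\lambda_1\sum_i F^{i\bi}$ (for the Monge--Amp\`ere operator, for instance, $F^{1\bar 1}\fg_{1\bar1}^2\sim\lambda_1$ while $\lambda_1\sum F^{i\bi}$ can grow like $\lambda_1^2$). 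Worse, making $A$ large feeds back through the critical-point identity $\nabla_i\log\lambda_1=-2\nabla_i\log\eta-\nabla_i\phi$ into the bad term $-\lambda_1^{-2}F^{i\bi}|\nabla_i\fg_{1\bar1}|^2$, producing a negative contribution of order $(\phi')^2$ that overwhelms the linear $\phi'$ gain.

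The mechanism that actually controls group (iii) comes from a part of group (ii) that you do not exploit. The quadratic-Hessian terms in $\nabla_{\bar1}\nabla_1\chi_{i\bi}$ include not only $\chi_{i\bi,\zeta_\alpha\bzeta_\beta}\,\p_1\p_\alpha u\,\p_{\bar1}\p_{\bbeta}u$ (your $\eta_\alpha=\p_1\p_\alpha u$, the purely holomorphic derivatives) but also $\chi_{i\bi,\zeta_\alpha\bzeta_\beta}\,\p_{\bar1}\p_\alpha u\,\p_1\p_{\bbeta}u$, where now $\eta_\alpha=u_{\alpha\bar1}$ are the \emph{mixed} derivatives. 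For $i\neq1$ the component $u_{1\bar1}\approx\lambda_1$ lies in the part of this $\eta$ orthogonal to $e_i$, so \eqref{A3} yields the strong negative term $-c_0\lambda_1^2\sum_{i}F^{i\bi}$ (this is the content of Lemma~\ref{lemma-C2-10}). After dividing by $\lambda_1$ one obtains $-c_0\lambda_1\sum_i F^{i\bi}$, which is precisely what absorbs all of group (iii). With this term in hand the gradient weight should be taken \emph{small} (the paper uses $\phi=-\log(1-b|\nabla u|^2)$ with $b\ll1$), not large. The argument is then closed using concavity and the hypothesis $c_1=\sup_\Gamma f-\sup_{B_R}\psi>0$, via the inequality $\fg_{1\bar1}\sum F^{i\bi}\geq c_1-\tfrac{1}{2\fg_{1\bar1}}\sum F^{i\bi}\fg_{i\bi}^2$; this is why $c_1$ appears explicitly in the dependence of the constant, a point your outline does not address.
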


\begin{remark}
\label{gn-remark-I10}
Theorem~\ref{gj-th4} still holds for $\psi = \psi (z, u, \nabla u)$ provided in addition
that either $\psi (z, u, \nabla u)$ is convex in $\nabla u$ or
\begin{equation}
\label{gn-I185}
\lim_{|\lambda| \rightarrow + \infty} \sum f_i = + \infty, 
\;\; \forall \; \lambda \in \Gamma.
 \end{equation}
 This holds for $f = \sigma_k^{\frac{1}{k}}$ on $\Gamma_k$, $k > 1$
 \end{remark}

It is well known that in general there are no interior second order estimates for fully nonlinear elliptic equations. In particular, Theorem~\ref{gj-th4} fails without condition~\eqref{A3}, even in the case $\chi = \chi (z)$.

Turning to global second derivative estimates where assumption~\eqref{A3} is dropped,
we first recall some notions from \cite{Guan14}.
For $\sigma \in (\sup_{\partial \Gamma} f, \sup_{\Gamma} f)$
define
\[ \Gamma^{\sigma} = \{\lambda \in \Gamma: f (\lambda) > \sigma\}. \]
By \eqref{3I-20} and \eqref{3I-30},
$\partial \Gamma^{\sigma} = \{\lambda \in \Gamma: f (\lambda) = \sigma\}$ is a smooth and convex complete hypersurface in $\Gamma$.
For $\lambda \in \partial \Gamma^{\sigma}$ let
$\nu_{\lambda} = Df (\lambda)/|Df (\lambda)|$
denote the unit normal vector to $\partial \Gamma^{\sigma}$ at $\lambda$.

\begin{definition}[\cite{Guan14}] 
For $\mu \in \bfR^n$
let 
 \[ S^{\sigma}_{\mu} = \{\lambda \in \partial
\Gamma^{\sigma}: \nu_{\lambda} \cdot (\mu - \lambda) \leq 0\}. \]
The  {\em tangent cone at infinity} 
to $\Gamma^{\sigma}$ is defined as
\[ \begin{aligned}
\cC^+_{\sigma}
 \,& = \{\mu \in \bfR^n:
              S^{\sigma}_{\mu} \; \mbox{is compact}\}.
    \end{aligned} \]
\end{definition}

Clearly, $\cC^+_{\sigma}$ is a symmetric convex cone, and 
$\Gamma^{\sigma} \subset \cC^+_{\sigma}$.
The following results were proved in \cite{Guan14}.

\begin{theorem}[\cite{Guan14}]
\label{3I-th4.5}
{\bf a)}
$\mathcal{C}_{\sigma}^+$ is open.
{\bf b)}
Let $\mu \in {\mathcal{C}}_{\sigma}^+$.
There exist $\varepsilon, R > 0$ such that
\begin{equation}
\label{gn-I115}
f_i (\lambda) (\mu_i - \lambda_i) \geq \varepsilon \sum f_i (\lambda) + \varepsilon,
\;\; \forall \; \lambda \in \partial \Gamma^{\sigma} \setminus B_R (0).
\end{equation}
\end{theorem}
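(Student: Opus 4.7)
The plan is to first establish a stronger quantitative version of openness---namely, for $\mu \in \cC_\sigma^+$, the function $\phi_\mu(\lambda) := \nu_\lambda \cdot (\mu - \lambda)$ is uniformly bounded below by some $\eta > 0$ on $\partial \Gamma^\sigma$ outside a large ball $B_R(0)$---and then deduce both (a) and (b) from it. Openness in (a) is immediate: for $\mu'$ with $|\mu' - \mu| < \eta/2$, we have $\phi_{\mu'}(\lambda) \geq \eta/2 > 0$ for $|\lambda| \geq R$, so $S^\sigma_{\mu'} \subset B_R(0)$ is compact, hence $\mu' \in \cC_\sigma^+$.

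To prove the quantitative bound I would argue by contradiction, assuming a sequence $\lambda_k \in \partial \Gamma^\sigma$ with $|\lambda_k| \to \infty$ and $\phi_\mu(\lambda_k) \to 0^+$ (the value is eventually positive because $S^\sigma_\mu$ is already bounded). After extracting a subsequence, $\nu_{\lambda_k} \to \nu_\infty \in S^{n-1}$. The decisive observation is that $\lambda_k$ cannot be a critical point of $\phi_\mu$ on $\partial \Gamma^\sigma$ for $k$ large: at such a critical point $\mu - \lambda$ would be parallel to $\nu_\lambda$, forcing $|\mu - \lambda| = |\phi_\mu(\lambda)|$, which is incompatible with $|\lambda_k| \to \infty$ and $\phi_\mu(\lambda_k) \to 0$. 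Non-criticality combined with the smoothness of $\partial \Gamma^\sigma$ (supplied by $f_i > 0$) then permits, via a quantitative implicit-function/level-set argument, the construction of nearby points $\tilde{\lambda}_k \in \partial \Gamma^\sigma$ with $\phi_\mu(\tilde{\lambda}_k) \leq 0$ and $|\tilde{\lambda}_k| \to \infty$, contradicting compactness of $S^\sigma_\mu$.

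For (b), I use openness from (a) to secure $\mu - 2\epsilon \mathbf{1} \in \cC_\sigma^+$ for sufficiently small $\epsilon > 0$, and apply the quantitative bound of (a) to that point to obtain $R, \eta > 0$ with
\[
 \nu_\lambda \cdot (\mu - 2\epsilon\mathbf{1} - \lambda) \geq \eta \quad \text{for } |\lambda| \geq R.
\]
Multiplying by $|Df(\lambda)|$ yields $Df(\lambda) \cdot (\mu - \lambda) \geq 2\epsilon \sum f_i + \eta |Df(\lambda)|$, which already gives the $\epsilon \sum f_i$ contribution. The additional constant $+\epsilon$ I would extract from the concavity inequality $f(\mu') \leq \sigma + Df(\lambda) \cdot (\mu' - \lambda)$: if $\mu' := \mu - \epsilon\mathbf{1} \in \Gamma^\sigma$ (automatic when $\mu \in \Gamma^\sigma$ and $\epsilon$ is small), then $Df(\lambda) \cdot (\mu - \epsilon\mathbf{1} - \lambda) \geq f(\mu - \epsilon\mathbf{1}) - \sigma > 0$, and combining with the previous estimate and taking the smaller of the two positive constants produces the required inequality. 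For the residual case $\mu \in \cC_\sigma^+ \setminus \overline{\Gamma^\sigma}$, the term $\eta |Df|$ combined with a uniform lower bound on $|Df|$ (equivalently $\sum f_i$) on $\partial \Gamma^\sigma$ outside a ball supplies the constant.

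The main obstacle I anticipate is the perturbation step in part (a): verifying rigorously that $\tilde{\lambda}_k$ can be chosen within bounded---or at least $o(|\lambda_k|)$---distance of $\lambda_k$. This requires quantitative control of the tangential gradient of $\phi_\mu$ in terms of the second fundamental form of $\partial \Gamma^\sigma$, which can flatten at infinity, so one must balance the growth of the tangential part of $\mu - \lambda_k$ against the potential decay of principal curvatures. A secondary difficulty in (b) is handling $\mu \in \cC_\sigma^+ \setminus \overline{\Gamma^\sigma}$, where the concavity argument does not apply directly and one must instead leverage the asymptotic behavior of $|Df|$ derived in the course of proving (a).
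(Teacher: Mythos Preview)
The paper does not itself prove this theorem; it is quoted from \cite{Guan14}. The paper does, however, prove the closely related Theorem~\ref{gn-th10} in Section~\ref{gn-P}, and that argument (together with the remarks following it) reflects the method of \cite{Guan14}, which is quite different from yours and sidesteps exactly the obstacles you raise.

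Your perturbation step for the quantitative lower bound on $\phi_\mu$ is a genuine gap, not a technicality. Producing $\tilde\lambda_k \in \partial\Gamma^\sigma$ with $\phi_\mu(\tilde\lambda_k)\le 0$ near $\lambda_k$ requires an inverse-function estimate controlled by the shape operator of $\partial\Gamma^\sigma$, and nothing in \eqref{3I-20}--\eqref{3I-30} prevents the principal curvatures from decaying faster than the tangential component of $\mu-\lambda_k$ grows. The route taken in the proof of Theorem~\ref{gn-th10} avoids curvature entirely: concavity of $f$ gives $\nu_{\lambda_k}\cdot\lambda_k \le |\mu'|$ for any fixed $\mu'\in\Gamma^\sigma$ (see \eqref{gn-P30}), while the assumed failure of the estimate bounds $\nu_{\lambda_k}\cdot\lambda_k$ from below; along a subsequence $\nu_{\lambda_k}\to\nu$ and $\nu_{\lambda_k}\cdot\lambda_k\to c$, producing a closed half-space $\{\nu\cdot x\ge c\}\supset\Gamma^\sigma$ with $\mu$ on its boundary, and one argues that this forces $\mu$ onto the boundary of the cone. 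No level-set geometry beyond convexity is used.

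Your treatment of the additive constant in (b) has a second gap. In the case $\mu\notin\overline{\Gamma^\sigma}$ you invoke a uniform lower bound on $|Df|$ along $\partial\Gamma^\sigma$ outside a ball, but this is neither assumed nor a consequence of the hypotheses, and you do not actually derive it in the course of your argument for (a). The device from \cite{Guan14}, recalled in the paragraph after the proof of Theorem~\ref{gn-th10}, is different: once $\nu_\lambda\cdot(\mu-\lambda)>0$, the segment from $\lambda$ toward $\mu$ immediately enters $\Gamma^\sigma$ (because $\nu_\lambda$ is the inward normal to the convex superlevel set), so for some $t_\lambda<1$ one has $t\lambda+(1-t)\mu\in\overline{\Gamma^\sigma}$ for $t_\lambda\le t\le 1$, and concavity yields $\sum f_i(\lambda)(\mu_i-\lambda_i)\ge \sup_{t_\lambda\le t\le 1} f(t\lambda+(1-t)\mu)-\sigma>0$. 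Uniform positivity for large $|\lambda|$ is then obtained by compactness of $\partial\Gamma^\sigma\cap\partial B_r(0)$, not from any bound on $|Df|$.
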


We next introduce a new quantity which will also play a key role in a forthcoming paper~\cite{GGQ}.

\begin{definition}
The {\em rank of ${\mathcal{C}}_{\sigma}^+$} is defined to be
\[ \min \{r (\nu): \mbox{$\nu$ is the unit normal vector of a supporting plane
to ${\mathcal{C}}_{\sigma}^+$}\}. \]
where for a unit vector
$\nu \in \ol{\Gamma}_n$, $r (\nu)$ denotes the number of non-zero
components of $\nu$.
For convenience we define the rank of $ \bfR^n$ to be $n$.
\end{definition}

\begin{remark}
For $f = \sigma_k^{\frac{1}{k}}$ defined on $\Gamma_k$, the rank of
$
\mathcal{C}_{\sigma}^+$ is $n-k+1$.
This follows from an inequality of Lin-Trudinger~\cite{LT1994}.
\end{remark}

It is also easy to see 

\begin{lemma}
\label{gn-lemma-R20}
For $f = \log \rho_k$ defined on $\cP_k$, $1 \leq k \leq n$, the rank of
$
\mathcal{C}_{\sigma}^+$ is $k$.
\end{lemma}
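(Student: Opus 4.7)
The plan is to identify $\cC_\sigma^+$ with the cone $\cP_k$, after which the rank reads off from elementary polyhedral geometry. The key computational input is homogeneity: since $\rho_k(t\lambda) = t^d \rho_k(\lambda)$ with $d := \binom{n}{k}$, Euler's identity gives $\sum_i f_i(\lambda) \lambda_i \equiv d$ on $\cP_k$ for $f = \log \rho_k$, and the gradient components take the explicit form $f_i(\lambda) = \sum_{I \ni i} \lambda_I^{-1}$ with $\lambda_I := \sum_{i \in I} \lambda_i$. In particular, for any $\mu \in \bfR^n$ one has $Df(\lambda) \cdot \mu = \sum_{|I| = k} \mu_I/\lambda_I$, so the condition $\nu_\lambda \cdot (\mu - \lambda) \leq 0$ defining $S_\mu^\sigma$ reduces to $\sum_I \mu_I/\lambda_I \leq d$.

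To show $\cP_k \subset \cC_\sigma^+$, I would fix $\mu \in \cP_k$, so every $\mu_I > 0$. Positivity of the summands in the inequality above forces $\lambda_I \geq \mu_I/d$ for each $I$, and combined with $\prod_I \lambda_I = e^\sigma$ this also yields a uniform upper bound on every $\lambda_I$. A short linear algebra step---the map $\lambda \mapsto (\lambda_I)_{|I|=k}$ is injective, since its kernel consists of constant vectors $(c,\ldots,c)$ with $kc = 0$---upgrades this to a uniform bound on $\lambda$, so $S_\mu^\sigma$ is compact.

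For the reverse inclusion I would suppose $\mu_{I_0} < 0$ for some $|I_0| = k$ and construct an unbounded sequence in $S_\mu^\sigma$ by setting $\lambda^{(m)}_i = t_m$ for $i \in I_0$ and $\lambda^{(m)}_i = s_m$ for $i \notin I_0$, with $s_m \to +\infty$ and $t_m \to 0^+$ tuned so that $\rho_k(\lambda^{(m)}) = e^\sigma$. This tuning is feasible because only the factor $k t_m$ of $\rho_k$ vanishes, while the remaining factors $a t_m + (k-a) s_m$ (with $a = |J \cap I_0| < k$) grow polynomially in $s_m$. One then checks $f_i(\lambda^{(m)}) \sim 1/(k t_m)$ for $i \in I_0$ and $f_i(\lambda^{(m)}) = O(1/s_m)$ otherwise, so the unit normal $\nu_{\lambda^{(m)}}$ converges to $e_{I_0}/\sqrt{k}$ (where $e_{I_0} := \sum_{i \in I_0} e_i$) and, by the Euler identity, $\nu_{\lambda^{(m)}} \cdot \lambda^{(m)} = d/|Df(\lambda^{(m)})| \to 0$. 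Hence $\nu_{\lambda^{(m)}} \cdot (\mu - \lambda^{(m)}) \to \mu_{I_0}/\sqrt{k} < 0$, placing $\lambda^{(m)}$ in $S_\mu^\sigma$ for all large $m$ and contradicting compactness. Together with the openness of $\cC_\sigma^+$ from Theorem~\ref{3I-th4.5}(a), the two inclusions force $\cC_\sigma^+ = \cP_k$.

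Finally, since $\cP_k$ is the polyhedral cone cut out by the facet inequalities $\lambda_I \geq 0$ for $|I| = k$, its dual cone is the nonnegative conic hull of $\{e_I : |I| = k\}$. Every supporting hyperplane of $\cP_k$ passes through the origin with unit normal $\nu \in \ol{\Gamma}_n$ of the form $\nu = \sum_I c_I e_I$ with $c_I \geq 0$; its support $\bigcup_{c_I > 0} I$ has cardinality at least $k$, with equality when exactly one $c_I$ is nonzero. This yields $\min r(\nu) = k$. The main obstacle is the construction in the second inclusion: picking $t_m$ at precisely the right rate so that $\rho_k(\lambda^{(m)}) = e^\sigma$ holds for every $m$ and the asymptotics of $f_i(\lambda^{(m)})$ deliver both $\nu_{\lambda^{(m)}} \to e_{I_0}/\sqrt{k}$ and $|Df(\lambda^{(m)})| \to \infty$ quickly enough for $\nu_{\lambda^{(m)}} \cdot \lambda^{(m)}$ to vanish in the limit.
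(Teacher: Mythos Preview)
The paper gives no proof of this lemma; it is introduced with ``It is also easy to see'' and left at that. So there is no paper argument to compare against, and your route---identify $\cC_\sigma^+$ explicitly and read off the rank---is a reasonable one.

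Your argument is essentially correct for $1 \le k < n$: the inclusion $\cP_k \subset \cC_\sigma^+$ via the bounds $\lambda_I \ge \mu_I/d$ together with $\prod_I \lambda_I = e^\sigma$ and the injectivity of $\lambda \mapsto (\lambda_I)_{|I|=k}$ is fine (for $k<n$ any two indices $i,j$ can be swapped between overlapping $k$-subsets, forcing all coordinates equal in the kernel); the blow-up construction for the reverse inclusion is also fine, and openness of $\cC_\sigma^+$ closes the boundary case $\mu_{I_0}=0$. The rank computation for $\cP_k$ is then straightforward polyhedral duality.

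There is, however, a genuine gap at $k = n$. Both halves of your argument break there: the linear map $\lambda \mapsto (\lambda_I)$ reduces to $\lambda \mapsto \sum_i \lambda_i$, whose kernel is the full hyperplane $\{\sum \lambda_i = 0\}$ and not just constant vectors with $kc=0$; and your unbounded sequence needs indices outside $I_0$, which do not exist when $|I_0| = n$. In fact the identification $\cC_\sigma^+ = \cP_k$ is \emph{false} for $k = n$: here $\partial\Gamma^\sigma$ is the affine hyperplane $\{\sum \lambda_i = e^\sigma\}$ with constant unit normal $(1,\dots,1)/\sqrt n$, so $S_\mu^\sigma$ is empty precisely when $\sum \mu_i > e^\sigma$ and equals all of $\partial\Gamma^\sigma$ otherwise. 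Thus $\cC_\sigma^+ = \{\sum \mu_i > e^\sigma\}$, a translate of $\cP_n$ but not $\cP_n$ itself. The conclusion of the lemma survives---the unique supporting hyperplane still has normal $(1,\dots,1)/\sqrt n$ with $r(\nu)=n$---but you should isolate the case $k=n$ and handle it by this direct one-line computation rather than forcing it through the general scheme.
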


Our second main result may be stated as follows.

\begin{theorem}
\label{3I-th4}
Let $u \in C^4 (M)$ be an admissible solution of \eqref{3I-10} with $\psi \in C^2 (M)$ and let
\[ r_0 = \min \,
   \big\{\mbox{rank of ${\mathcal{C}}_{\psi (z)}^+$}: z \in M\big\}. \]
Assume in addition to \eqref{3I-20}-\eqref{3I-40} that 
\begin{equation}
\label{A2}
\mbox{$\chi_{\xi \bxi} (\cdot, p)$ is a concave function in $p \in T^*_z (M)$},
\; \forall \;  \xi \in T_z^{1,0} M
\end{equation}
where $T^*_z (M)$ denotes the real cotangent space of $M$ at $z$, and that
at any point on $M$ where in local coordinates
$g_{i\bj} = \delta_{ij}$ and $\fg_{i\bj} = \delta_{ij} \lambda_i$ with
$\lambda_1 \geq \cdots \geq \lambda_n$,
\begin{equation}
\label{A5}
\Big|\sum f_i \chi_{i \bi \bar{1}, \zeta_{\alpha}}\Big|
 + \sum f_i |\chi_{i \bar{1}, \zeta_{\alpha}}|^2
   \leq C\lambda_1f_{\alpha},
  \;\; \forall \,  \alpha \leq n - r_0.
\end{equation}
 Then the following estimate holds
\begin{equation}
\label{hess-a10c}
\max_M |\partial \bpartial u| \leq C_1 e^{C_2 (u - \inf_M u)}
\end{equation}
where $C_1$ depends on $|\nabla u|_{C^0 (M)}$ and $C_2$ is a
uniform constant, 
provided that there exists a function $\ul{u} \in C^2 (M)$ satisfying
\begin{equation}
\label{3I-200}
\lambda (\chi_{\ul{u}} (z)) \in {\mathcal{C}}_{\psi (z)}^+
\;\; \forall \; z \in M.
\end{equation}

\end{theorem}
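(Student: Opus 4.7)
The plan is to apply the maximum principle to a test function of the form
$$W = \log \lambda_1(\fg) + \varphi(|\nabla u|^2) + A(\ul{u} - u) + B e^{-\eta u},$$
where $\lambda_1$ denotes the largest eigenvalue of $\fg_{i\bj} = \chi_{i\bj}[u] + u_{i\bj}$, $\varphi$ is a suitably chosen increasing concave function, and $A, B, \eta > 0$ are constants to be selected. At an interior maximum $z_0$, I would choose normal coordinates diagonalizing $\fg$ with $\lambda_1 \geq \cdots \geq \lambda_n$, and handle the non-smoothness of $\lambda_1$ by the standard perturbation/diagonalization trick. The goal is to derive, from $0 \geq LW$ with $L \equiv \sum f_i \nabla_i \nabla_{\bi}$, a pointwise inequality of the form $c \lambda_1 \leq C_1 e^{C_2(u - \inf u)}$, which is exactly \eqref{hess-a10c}.

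The bulk of the calculation is computing $L(\log \lambda_1)$. I would differentiate the equation twice in the $\partial_1 \partial_{\bar 1}$ direction; the concavity \eqref{3I-30} of $f$ yields the usual "good" third-derivative contributions, but the dependence of $\chi$ on $\partial u, \bpartial u$ generates several new terms that must be tracked via the formulas \eqref{gn-R50}--\eqref{gn-R80}. The terms quadratic in gradients of derivatives of $u$, namely $\sum f_i \chi_{i\bi, \zeta_\alpha \bzeta_\beta} u_{\alpha 1} u_{\bbeta \bar 1}$ and their analogues, are controlled by assumption \eqref{A2}, which upon differentiating twice gives a favorable sign. The commutators $[\nabla_1,\nabla_{\bar 1}]$ applied to $u$ contribute bounded curvature/torsion terms from $(M, \omega)$.

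The main obstacle is handling the linear-in-gradient terms $\sum f_i \chi_{i\bi\bar{1}, \zeta_\alpha} u_{\alpha 1}$ and $\sum f_i |\chi_{i\bar{1}, \zeta_\alpha}|^2 |u_{\alpha 1}|^2 / \lambda_1$ that cannot be absorbed by \eqref{A2} alone. Here the rank $r_0$ of $\mathcal{C}^+_{\psi(z)}$ enters in an essential way: I would split the index $\alpha$ into the two regimes $\alpha > n - r_0$ and $\alpha \leq n - r_0$. For $\alpha > n - r_0$, the key inequality to be established in Section~\ref{gn-P} (the gradient-dependent extension of the inequality from \cite{Guan14}) gives $f_\alpha \geq c_0 \sum f_i$, so Cauchy--Schwarz suffices. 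For $\alpha \leq n - r_0$, hypothesis \eqref{A5} provides exactly the bound
$$\Big|\sum_i f_i \chi_{i\bi\bar{1},\zeta_\alpha}\Big| + \sum_i f_i |\chi_{i\bar{1},\zeta_\alpha}|^2 \leq C\lambda_1 f_\alpha,$$
which pairs with the coercive gradient terms $\sum f_i |u_{i k}|^2$ produced by $L(\varphi(|\nabla u|^2))$ (with the standard choice $\varphi(s) = -\log(2K - s)$, $K = 1 + \sup|\nabla u|^2$) via Cauchy--Schwarz to close the bound.

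Finally, the subsolution condition \eqref{3I-200} together with Theorem~\ref{3I-th4.5}(b) gives
$$L(\ul u - u) \geq \varepsilon \sum_i f_i + \varepsilon$$
provided $\lambda_1$ is large enough that $\lambda(\fg)$ falls outside the compact set $B_R(0)$; this is the mechanism through which the tangent-cone-at-infinity of the level hypersurface $\{f = \psi\}$ produces the needed positive term. Choosing $A$ sufficiently large then makes $L(A(\ul u - u))$ dominate, while the exponential weight $B e^{-\eta u}$ produces a term of order $B\eta^2 e^{-\eta u} \sum f_i |\nabla_i u|^2$ used to absorb gradient cross-terms and to yield the exponential factor on the right of \eqref{hess-a10c}. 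Assembling these ingredients at $z_0$ gives $0 \geq LW \geq c \lambda_1 - C \, e^{C_2(u-\inf u)}$, from which \eqref{hess-a10c} follows. The conceptual difficulty is entirely concentrated in isolating the gradient-dependent correction terms in $\chi$ and matching each to the correct structural hypothesis (\eqref{A2}, \eqref{A5}, or the rank bound on $\mathcal{C}^+_\psi$); every other step is by now a standard variant of the Hou--Ma--Wu / Sz\'ekelyhidi argument.
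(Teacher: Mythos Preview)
Your outline follows the same architecture as the paper: a maximum-principle argument on a test quantity combining the largest eigenvalue of $\fg$, a gradient barrier of the form $-\log(\text{const}-|\nabla u|^2)$, and the subsolution term $A(\ul u - u)$, with \eqref{A2} controlling the second-order-in-$\zeta$ quantity $Q$ and \eqref{A5} controlling the linear gradient terms. Two points deserve comment.

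First, the exponential weight $Be^{-\eta u}$ is not in the paper and is unnecessary. The paper's barrier $\phi = A(\ul u - u + \text{const}) - \log(1 - b|\nabla u|^2)$ already yields both the coercive term $b\,\fg_{1\bar 1}\sum_{i,k} F^{i\bi}|\nabla_i\nabla_k u|^2$ and, upon unwinding the test quantity at a generic point, the factor $e^{C_2(u-\inf u)}$ in \eqref{hess-a10c}. The paper also uses the Tosatti--Weinkove $\gamma$-perturbation \eqref{gblq-C10A} rather than $\log\lambda_1$ directly, together with an index partition $J\cup K\cup L$ to manage the third-order term $E$; your sketch omits this, though the Hou--Ma--Wu/Sz\'ekelyhidi variants you allude to serve the same purpose.

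Second, and more substantively, your claim that Section~\ref{gn-P} gives $f_\alpha \geq c_0\sum f_i$ for $\alpha > n - r_0$ is a misattribution. Section~\ref{gn-P} proves Theorem~\ref{gn-th10}, a dichotomy for the enlarged cone $\widetilde{\mathcal{C}}^+_\sigma$, which is a different statement. What you need is that the rank hypothesis forces, via a compactness argument on the normals $\nu_\lambda$, that the ordered components $f_1\leq\cdots\leq f_n$ cannot have more than $n-r_0$ of them small relative to $\sum f_i$; this is believable but is not established anywhere in the paper. The paper instead organizes the endgame around the dichotomy of Lemma~\ref{gblq-lemma-C20}: either $F^{1\bar 1}\geq\theta\sum F^{i\bi}$ (in which case every $f_\alpha$ is large and all of $A_3$ is absorbed by $A_4$), or $\cL\eta\geq\theta(\sum F^{i\bi}+1)$, in which case \eqref{A5} is invoked to obtain \eqref{gblq-I63}. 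The role of $r_0$ in the paper is only to specify for which $\alpha$ the structural condition \eqref{A5} must be \emph{assumed}; how the complementary range $\alpha>n-r_0$ is handled in the second case is left implicit there as well, so your proposal does not close a gap so much as relocate it.
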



\begin{remark}
For $\chi = \chi (z)$ we obtain \eqref{hess-a10c} under conditions~\eqref{3I-20}-\eqref{3I-40} and \eqref{3I-200}. 
\end{remark}

\begin{remark}
In \cite{Szekelyhidi}, Sz\'ekelyhidi introduced the notation of 
$\cC$-subsolution which has been used widely.
A function $\ul{u} \in C^2 (M)$ is called a $\cC$-subsolution if
\begin{equation}
\label{3I-200S}
(\lambda (\chi_{\ul{u}} (z)) + \Gamma_n) \cap \partial \Gamma^{\psi (z)}
\; \mbox{is compact},
\;\; \forall \; z \in M.
\end{equation}
Among other interesting results, he derived the second order estimate assuming 
~\eqref{3I-20}-\eqref{3I-40}, \eqref{GN-I105} and the existence of a $\cC$-subsolution.
\end{remark}

\begin{remark}
It was shown in \cite{Guan} that if $\Gamma$ is a type I cone, then conditions 
\eqref{3I-200} and \eqref{3I-200S} are equivalent. A  cone $\Gamma$ is of type I if 
each positive $\lambda_i$-axis belongs to the boundary of $\Gamma$; see 
Caffarelli-Nirenberg-Spruck~\cite{CNS3} for definition. For $k > 1$, $\Gamma_k$ is a type I cone while $\Gamma_1$ is a type II cone, meaning not of type I.
\end{remark}

\begin{remark}
For $f = (\sigma_n/\sigma_l)^{\frac{1}{n-l}}$,
$1 \leq l < n$, conditions 
\eqref{3I-200} and \eqref{3I-200S} are all equivalent to the cone condition of
Song-Weinkove~\cite{SW08} and Fang-Lai-Ma~\cite{FLM11}; see e.g. \cite{Szekelyhidi}.
\end{remark}

We now introduce a larger cone containing $\mathcal{C}_{\sigma}^+$.
Note that the unit normal vector of any supporting hyperplane to
$\mathcal{C}_{\sigma}^+$ belongs to $\ol{\Gamma_n}$.
We define $\widetilde{\mathcal{C}}_{\sigma}^+$  to be the region in $\bfR^n$
bounded by those supporting hyperplanes to $\mathcal{C}_{\sigma}^+$ with
unit normal vector in $\partial \Gamma_n$;
so $\widetilde{\mathcal{C}}_{\sigma}^+ = \bfR^n$ if there are no such
supporting planes.

Clearly, the rank of $\widetilde{\mathcal{C}}_{\sigma}^+$ is equal to that of
${\mathcal{C}}_{\sigma}^+$.
Moreover, $\mu + \Gamma_n \subset \widetilde{\mathcal{C}}_{\sigma}^+$
for $\mu \in \widetilde{\mathcal{C}}_{\sigma}^+$
and  $\widetilde{\mathcal{C}}_{\sigma}^+ \subset \widetilde{\mathcal{C}}_{\rho}^+$ if
$\sigma \geq \rho$.

We have the following extension of Theorem~\ref{3I-th4}. 

\begin{theorem}
\label{3I-th40}
Suppose that $f$ satisfies the additional assumption
\begin{equation}
\label{3I-180}
\sum f_i \geq c_0 > 0 \;\; \mbox{in $\{\lambda \in \Gamma: \inf_M \psi \leq f (\lambda) \leq \sup_M \psi\}$}.
\end{equation}
Theorem~\ref{3I-th4} then still holds with assumption \eqref{3I-200} replaced by
\begin{equation}
\label{3I-200'}
\lambda (\chi_{\ul{u}} (z)) \in \widetilde{\mathcal{C}}_{\psi (z)}^+
\;\; \forall \; z \in M.
\end{equation}

\end{theorem}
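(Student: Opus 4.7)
The proof of Theorem \ref{3I-th40} proceeds by paralleling the argument for Theorem \ref{3I-th4} almost verbatim, with a single substitution: the geometric inequality (\ref{gn-I115}) of Theorem \ref{3I-th4.5}(b), which is valid for $\mu \in \mathcal{C}_{\sigma}^+$, must be replaced by an analogue valid for $\mu \in \widetilde{\mathcal{C}}_{\sigma}^+$. The role of the added hypothesis (\ref{3I-180}), namely $\sum f_i \geq c_0 > 0$, is precisely to compensate for the loss of those supporting hyperplanes of $\mathcal{C}_{\sigma}^+$ whose unit normals lie in the interior of $\G_n$, since exactly these are the ones discarded when passing to the larger cone $\widetilde{\mathcal{C}}_{\sigma}^+$.

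The main technical step, which I would carry out in Section \ref{gn-P}, is to establish the following extension of Theorem \ref{3I-th4.5}(b): assuming (\ref{3I-20})--(\ref{3I-40}) and (\ref{3I-180}), for every $\mu \in \widetilde{\mathcal{C}}_{\sigma}^+$ there exist $\e, R > 0$ such that
\[ \sum f_i (\l) (\mu_i - \l_i) \geq \e \sum f_i (\l) + \e \quad \text{for every } \l \in \p \G^{\sigma} \setminus B_R(0). \]
I would split $\p \G^{\sigma} \setminus B_R(0)$ into two regimes according to whether the unit normal $\nu_\l = Df(\l)/|Df(\l)|$ lies near $\p \G_n$ or stays in a compact subset of $\text{int}(\G_n)$. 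In the first regime the defining property of $\widetilde{\mathcal{C}}_{\sigma}^+$ places $\mu$ strictly on the positive side of the relevant limiting supporting hyperplane, and a compactness argument along the lines of \cite{Guan14} yields the desired lower bound, with $\sum f_i \geq c_0$ giving the additional $\e \sum f_i$ scaling. In the second regime all components of $\nu_\l$ are uniformly bounded below, so $|Df(\l)|$ and $\sum f_i(\l)$ are comparable, and the convex geometry of $\p \G^\sigma$ lets us absorb any possibly negative contribution of $\nu_\l \cdot \mu$ into the geometrically positive quantity $\nu_\l \cdot \l$, which diverges along $\p \G^\sigma$. I expect the main obstacle to be this second regime: along sequences $\l_k \to \infty$ with $\nu_{\l_k}$ converging to a limit $\nu^* \in \text{int}(\G_n)$, one must show that $\nu_{\l_k} \cdot (\mu - \l_k)$ stays bounded below even though $\mu$ may violate the supporting hyperplane of $\mathcal{C}_{\sigma}^+$ with that normal — here the only available input is (\ref{3I-180}) together with the asymptotic behavior of the smooth convex hypersurface $\p \G^\sigma$.

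With this extended inequality in hand, the second derivative estimate of Section \ref{gn-C2} proved for Theorem \ref{3I-th4} goes through without further change: at the single point where (\ref{gn-I115}) is invoked to dominate the cross-terms involving the subsolution $\ul{u}$, the extended inequality applies to $\mu = \l(\chi_{\ul{u}}(z)) \in \widetilde{\mathcal{C}}_{\psi(z)}^+$ and delivers exactly the same bound. This yields (\ref{hess-a10c}) and completes the proof.
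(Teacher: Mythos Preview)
Your proposal has a genuine gap: the extended inequality you aim to prove in Section~\ref{gn-P} is \emph{false} in general, even under hypothesis~\eqref{3I-180}. Take $f = \sigma_1$ on $\Gamma_1$ and any $\sigma > 0$. Then $f_i \equiv 1$, so \eqref{3I-180} holds, and the only supporting hyperplane to $\mathcal{C}_{\sigma}^+ = \{\sum \mu_i > \sigma\}$ has normal $\mathbf{1}/\sqrt{n} \in \mathrm{int}\,\Gamma_n$; hence $\widetilde{\mathcal{C}}_{\sigma}^+ = \bfR^n$. For $\mu = 0 \in \widetilde{\mathcal{C}}_{\sigma}^+$ and any $\lambda \in \partial \Gamma^{\sigma}$ one has $\sum f_i(\lambda)(\mu_i - \lambda_i) = -\sigma < 0$, so no inequality of the form \eqref{gn-I115} can hold on $\partial \Gamma^{\sigma} \setminus B_R(0)$ for any $R$. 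Your heuristic for the second regime is also incorrect: $\nu_\lambda \cdot \lambda$ does not diverge along $\partial \Gamma^{\sigma}$ --- by concavity it is bounded above (see \eqref{gn-P30}), and in the example just given it is the constant $\sigma/\sqrt{n}$.

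The paper's route avoids this obstruction by proving only a \emph{dichotomy} (Theorem~\ref{gn-th10}): for $\mu \in \widetilde{\mathcal{C}}_{\sigma}^+$ and any $\lambda \in \partial \Gamma^{\sigma}$, either \eqref{gn-I100} $f_k(\lambda) \geq \delta \sum f_i(\lambda)$ for all $k$, or \eqref{gn-I110} $\sum f_i(\lambda)(\mu_i - \lambda_i) \geq \varepsilon \sum f_i(\lambda)$. Your ``second regime'' is precisely the first alternative, and there the right move is not to force \eqref{gn-I115} but to use the consequence $F^{1\bar 1} \geq \delta \sum F^{i\bi}$ directly in the second-order estimate. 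This is exactly how Lemma~\ref{gblq-lemma-C20} is structured: case~\eqref{gblq-C215} handles the first alternative, while in case~\eqref{gblq-C210} one combines \eqref{gn-I110} with \eqref{3I-180} to recover the missing ``$+\,\varepsilon$'' (since $\varepsilon \sum f_i \geq \tfrac{\varepsilon}{2}\sum f_i + \tfrac{\varepsilon c_0}{2}$). The two cases are then finished separately in Section~\ref{gn-C2}. So the fix is not a stronger version of \eqref{gn-I115}, but a weaker one together with a case split already present in the proof of Theorem~\ref{3I-th4}.
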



\bigskip

\section{The cone $\widetilde{\mathcal{C}}_{\sigma}^+$ and extension of Theorem~\ref{3I-th4.5}}
\label{gn-P}
\setcounter{equation}{0}

\medskip

A crucial tool in the proof of Theorem~\ref{3I-th4} is
Theorem~\ref{3I-th4.5}.
To prove Theorem~\ref{3I-th40}, we need the following extension to
$\widetilde{\mathcal{C}}_{\sigma}^+$ in a slightly weaker form.

\begin{theorem}
\label{gn-th10}
Let $\mu \in \widetilde{\mathcal{C}}_{\sigma}^+$.
There exist $\delta, \varepsilon > 0$ such that for any
$\lambda \in \partial \Gamma^{\sigma}$, 
either \begin{equation}
\label{gn-I100}
f_k (\lambda) \geq \delta \sum f_i (\lambda), \;\; \forall \, k
\end{equation}
 or
\begin{equation}
\label{gn-I110}
\sum f_i (\lambda) (\mu_i - \lambda_i) \geq
  \varepsilon \sum f_i (\lambda). 
\end{equation}
\end{theorem}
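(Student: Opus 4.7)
The plan is to extend the blow-up/compactness argument used for Theorem~\ref{3I-th4.5} in \cite{Guan14} so as to detect the boundary normal direction in $\partial\Gamma_n$ precisely when \eqref{gn-I100} fails. I would argue by contradiction: suppose no such $\delta, \varepsilon$ exist. Then for each positive integer $n$ there are $\lambda^{(n)} \in \partial\Gamma^\sigma$ and an index $k_n$ with
\[ f_{k_n}(\lambda^{(n)}) < \tfrac{1}{n} \sum_i f_i(\lambda^{(n)}) \quad \text{and} \quad \sum_i f_i(\lambda^{(n)})(\mu_i - \lambda_i^{(n)}) < \tfrac{1}{n} \sum_i f_i(\lambda^{(n)}). \]
By pigeonholing I may fix $k_n = k_0$, and after normalizing $\tilde\nu^{(n)} := Df(\lambda^{(n)})/\sum_i f_i(\lambda^{(n)})$, which lies in the compact simplex $\{p \in \overline{\Gamma_n}: \sum_i p_i = 1\}$, and passing to a subsequence, $\tilde\nu^{(n)} \to \tilde\nu^*$ with $\tilde\nu^*_{k_0} = 0$, so $\tilde\nu^* \in \partial\Gamma_n$.

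The next step is to show $|\lambda^{(n)}| \to \infty$: any bounded subsequential limit $\lambda^*$ would lie in the interior of $\Gamma$ by \eqref{3I-40} and completeness of $\partial\Gamma^\sigma$, and then \eqref{3I-20} gives $f_{k_0}(\lambda^*) > 0$, contradicting $\tilde\nu^*_{k_0} = 0$. Along a further subsequence let $\tilde c := \lim_n \tilde\nu^{(n)} \cdot \lambda^{(n)} \in [-\infty,+\infty]$. The case $\tilde c = -\infty$ is excluded because the second inequality above forces $\tilde\nu^{(n)} \cdot \mu \to -\infty$, while $|\tilde\nu^{(n)} \cdot \mu| \leq \max_i |\mu_i|$ is bounded. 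To exclude $\tilde c = +\infty$, I would pick any $\tilde\mu_0 \in \Gamma^\sigma \subset \mathcal{C}_\sigma^+$ and divide the conclusion of Theorem~\ref{3I-th4.5} through by $\sum f_i$ to obtain $\tilde\nu^{(n)} \cdot (\tilde\mu_0 - \lambda^{(n)}) \geq \varepsilon_{\tilde\mu_0} > 0$ for $n$ large, which would force $\tilde\nu^{(n)} \cdot \tilde\mu_0 \to +\infty$, again impossible. Hence $\tilde c$ is finite.

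To close the argument, the second inequality passes in the limit to $\tilde\nu^* \cdot \mu \leq \tilde c$, while Theorem~\ref{3I-th4.5} applied to each $\tilde\mu \in \mathcal{C}_\sigma^+$ yields $\tilde\nu^* \cdot \tilde\mu \geq \tilde c + \varepsilon_{\tilde\mu} > \tilde c$. Thus $c' := \inf_{p \in \overline{\mathcal{C}_\sigma^+}} \tilde\nu^* \cdot p$ satisfies $c' \geq \tilde c$ and is finite, so $H := \{p : \tilde\nu^* \cdot p = c'\}$ is a supporting hyperplane of $\overline{\mathcal{C}_\sigma^+}$ with unit normal proportional to $\tilde\nu^* \in \partial\Gamma_n$. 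By the definition of $\widetilde{\mathcal{C}}_\sigma^+$, this hyperplane bounds the open set $\widetilde{\mathcal{C}}_\sigma^+$, so $\tilde\nu^* \cdot \mu > c' \geq \tilde c$, contradicting $\tilde\nu^* \cdot \mu \leq \tilde c$. The main technicality I expect to work through carefully is this last step: making rigorous the claim that $H$ lies among the supporting hyperplanes used in the definition of $\widetilde{\mathcal{C}}_\sigma^+$ when the infimum $c'$ is only approached asymptotically along $\overline{\mathcal{C}_\sigma^+}$, and that $\mu \in \widetilde{\mathcal{C}}_\sigma^+$ yields the strict inequality $\tilde\nu^* \cdot \mu > c'$; both should follow from the convex-analytic properties of $\mathcal{C}_\sigma^+$ established in \cite{Guan14}, but the asymptotic case requires care.
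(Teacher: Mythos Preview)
Your proposal is correct and follows essentially the same contradiction-and-compactness route as the paper's proof. Two small simplifications appear in the paper's version that you may want to adopt: it bounds $\nu_{\lambda_k}\cdot\lambda_k$ from above directly via concavity (for any fixed $\mu_0\in\Gamma^\sigma$, $0<f(\mu_0)-\sigma\le Df(\lambda_k)\cdot(\mu_0-\lambda_k)$), which bypasses both the appeal to Theorem~\ref{3I-th4.5} and the separate argument that $|\lambda^{(n)}|\to\infty$; and your flagged worry at the end dissolves once you recall that $\mathcal{C}_\sigma^+$ is a convex \emph{cone}, so any linear functional bounded below on it attains its infimum at the vertex, making $H$ a genuine supporting hyperplane.
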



\begin{proof} 
We prove by contradiction. Assume Theorem~\ref{gn-th10} is false. There exists
$\tilde{\mu} \in \widetilde{\mathcal{C}}_{\sigma}^+$ and
$\lambda_k \in \partial \Gamma^{\sigma}$ for each positive integer $k$
such that
 \begin{equation}
\label{gn-P10}
   \sum f_i (\lambda_k) (\tilde{\mu}_i - (\lambda_k)_i) \leq
  \dfrac{1}{k \sqrt{n}} \sum f_j (\lambda_k) \leq \dfrac{1}{k} |\nabla f(\lambda_k)|,
  \end{equation}
\begin{equation}
\label{gn-P15}
\min_i \{f_i (\lambda_k)\} \leq
    \dfrac{1}{k \sqrt{n}} \sum f_j (\lambda_k) \leq \dfrac{1}{k} |\nabla f(\lambda_k)|
\end{equation}
and therefore
\begin{equation}
\label{gn-P20}
\mbox{dist} (\nu_{\lambda_k}, \partial \Gamma_n)
= \min_i\dfrac{f_i(\l_k)}{|\nabla f(\l_k)|}\leq \frac{1}{k}.
\end{equation}

By the concavity of $f$, for any ${\mu} \in \Gamma^{\sigma}$ we have
\begin{equation}
\label{gn-P30}
0 <  f ({\mu}) - f(\lambda_k)
   \leq \sum f_i (\lambda_k) ({\mu}_i - (\lambda_k)_i).
\end{equation}
 Thus $\nu_{\lambda_k} \cdot \lambda_k \leq |{\mu}|$
(and is therefore uniformly bounded above).

On the other hand, from \eqref{gn-P10} we see that
\[  \nu_{\lambda_k} \cdot \lambda_k \geq  \nu_{\lambda_k} \cdot \tilde{\mu} - \frac{1}{k}
 \geq - |\tilde{\mu}| - 1.  \]
 This shows that $\{\nu_{\lambda_k} \cdot \lambda_k\}$ is bounded from below.

  Consequently,
 passing to a subsequence we may assume
\begin{equation}
\label{gn-P40}
\lim_{k \rightarrow \infty} \nu_{\lambda_k} = \nu 
\end{equation}
 and
\begin{equation}
\label{gn-P50}
 \lim_{k \rightarrow \infty} \nu_{\lambda_k} \cdot \lambda_k = c.
 \end{equation}
We have $\nu \in \partial \Gamma_n$ by \eqref{gn-P20},
and it follows from \eqref{gn-P30} and \eqref{gn-P50} that $\Gamma^{\sigma}$ is contained in the half space
\[ H_{\nu}^+ = \{\mu \in \bfR^n: \nu \cdot \mu > c\}. \]
As $\nu \in \partial \Gamma_n$, this implies $\mathcal{C}^+_{\s} \subset H_{\nu}^+$
and therefore 
$\widetilde{\mathcal{C}}^+_{\s} \subset H_{\nu}^+$.
Moreover, it follows from \eqref{gn-P10}  that $\nu \cdot \tilde{\mu} = c$
showing $\tilde{\mu} \in  \partial H_{\nu}^+$. Consequently, 
$\tilde{\mu} \in  \partial \widetilde{\mathcal{C}}^+_{\sigma}$ which is a
contradiction.
\end{proof}

It would be desirable to improve \eqref{gn-I110} to \eqref{gn-I115}; whether this is possible, however, is not clear to us at the moment.
For fixed $\mu \in \widetilde{\mathcal{C}}^+_{\sigma}$ let $\varepsilon > 0$ satisfy
\eqref{gn-I110} in Theorem~\ref{gn-th10} and denote
\[ A = \{\lambda \in \partial \Gamma^{\sigma}: \eqref{gn-I110} \; \mbox{holds}\}. \]
For $\lambda \in A$ let
\[ t_{\lambda} = \min \{t \geq 0: t \lambda + (1-t) \mu \in \ol{\Gamma^{\sigma}}\}. \]
We see that $t_{\lambda} < 1$, for otherwise \eqref{gn-I110} would not hold.
 As in \cite{Guan14} by the concavity of $f$,
 \[ \sum f_i (\lambda) (\mu_i - \lambda_i) \geq
     \sup_{t_{\lambda} \leq t \leq 1} f (t \lambda + (1-t) \mu) - \sigma > 0 \]
for $\lambda \in A$, as otherwise $t \lambda + (1-t) \mu \in \partial {\Gamma^{\sigma}}$
for all $t_{\lambda} \leq t \leq 1$ which contradicts \eqref{gn-I110}.
Clearly for $r$ large,
\[ h_{\mu} (r) :=
  \min_{\lambda  \in A \cap \partial B_r (0)} 
  \sup_{t_{\lambda} \leq t \leq 1} f (t \lambda + (1-t) \mu) - \sigma > 0\]
since $A \cap \partial B_r (0)$ is compact.
We end this section with the following question:
Is $h_{\mu} (r)$ nondecreasing in $r$?
A positive answer to this question would give an improvement of
Theorem~\ref{gn-th10} which enables us to drop assumption~\eqref{3I-180}
in Theorem~\ref{3I-th40}.

\bigskip

\section{The second order estimates}
\label{gn-C2}
\setcounter{equation}{0}
\medskip

In this section we derive the second order estimates in Theorems~\ref{gj-th4}
and \ref{3I-th4}.
Throughout the section, we use $\nabla$ to denote the Chern connection of $(M, \omega)$ and let $u \in C^4 (M)$ be an admissible solution of equation~\eqref{3I-10}.

In local coordinates $z = (z_1, \ldots, z_n)$, 
equation~\eqref{3I-10} can be written in the form
\begin{equation}
\label{cma2-M10}
 F (\fg_{i\bj}) = \psi
\end{equation}
where the function $F$ is defined by $F (X) = f (\lambda (X))$ for a real
$(1,1)$ form $X$ on $M$.
As usual we denote
\[ F^{i\bj} = \frac{\partial F}{\partial \fg_{i\bj}}, \;
   F^{i\bj, k\bl} = \frac{\partial^2 F}{\partial \fg_{k\bl} \partial \fg_{i\bj}}. \]

We use an idea of Tosatti-Weinkove~\cite{TWv13a}
and consider the quantity
which is given in local coordinates
\begin{equation}
\label{gblq-C10A}
 A := \sup_{z \in M} \max_{\xi \in T^{1,0}_z M}
  \; e^{(1 + \gamma) \phi} \fg_{p\bq} \xi_p \bar{\xi_q}
   (g^{k\bl} \fg_{i\bl} \fg_{k\bj} \xi_i \bar{\xi_j})^{\frac{\gamma}{2}}/|\xi|^{2 + \gamma}
\end{equation}
   where $\phi$ is a function depending on $u$ and $|\nabla u|$, and $\gamma > 0$ is a small constant to be determined; one may as well follow the approach of Szekelyhidi~\cite{Szekelyhidi}.
Assume that $A$ is achieved at an interior point $z_0 \in M$ for some
$\xi \in T^{1,0}_{z_0} M$, $|\xi| = 1$.
We choose local coordinates around $z_0$  such that
$g_{i\bj} = \delta_{ij}$
and that $\fg_{i\bj}$ is diagonal
at $z_0$ with
\[ \fg_{1\bar 1} \geq \fg_{2 \bar 2} \geq \cdots \geq \fg_{n\bn}. \]
We shall assume $\fg_{1\bar{1}} \geq 1$;  otherwise we are done.

As pointed out in \cite{TWv13a}, an important fact is that when $\gamma$ is chosen sufficient small we have $\xi = \partial_1$
and $A =  e^{(1 + \gamma) \phi}  \fg_{1\bar 1}^{1 + \gamma} $ at $z_0$.
Indeed, this is obvious if  $\fg_{n\bn} \geq - \fg_{1\bar 1}$.
Suppose $\fg_{n\bn} <  - \fg_{1\bar 1}$. Then $n \geq 3$ and
 $(n-1) \fg_{1\bar 1} \geq - \fg_{n\bn}$ since
\[ \sum  \fg_{k\bk} \geq 0.\]
Clearly, $\xi_i = 0$ for $1 < i < n$
and $\xi_1^2 + \xi_n^2 = 1$. It follows that
\[ A e^{- (1 + \gamma) \phi} \leq \fg_{1\bar 1} \xi_1^2
     (\fg_{1\bar 1}^2 \xi_1^2 + \fg_{n\bn}^2 \xi_n^2)^{\frac{\gamma}{2}}
     \leq   \xi_1^2  (\xi_1^2 + (n-1)^2 \xi_n^2)^{\frac{\gamma}{2}}
          \fg_{1\bar 1}^{1 + \gamma} \leq \fg_{1\bar 1}^{1 + \gamma} \]
provided that $\gamma \leq \frac{2}{n (n-2)}$. This shows that $\xi_n = 0$.

Let $W = g_{1\bar{1}}^{-1} g^{k\bl} \fg_{1\bl} \fg_{k\bar{1}}$.
The function
$e^{(1+\gamma)  \phi} g_{1\bar{1}}^{-1} \fg_{1\bar{1}} W^{\frac{\gamma}{2}}$ which is  locally well defined
attains a maximum $A = (e^{\phi} \fg_{1\bar{1}})^{1+\gamma}$ at $z_0$.
It follows that
at $z_0$
\begin{equation}
\label{gblq-C80'}
 \begin{aligned}
\frac{\partial_i (g_{1\bar{1}}^{-1} \fg_{1\bar{1}})}{\fg_{1\bar{1}}}
   +  \frac{\gamma \partial_i W}{2 W} + (1+ \gamma)  \partial_i \phi = \,& 0, \\
\frac{\bpartial_i (g_{1\bar{1}}^{-1} \fg_{1\bar{1}})}{\fg_{1\bar{1}}}
   +\frac{ \gamma \bpartial_i W}{2 W} + (1+ \gamma) \bpartial_i \phi = \,& 0
\end{aligned}
\end{equation}
for each $1 \leq i \leq n$, and
\begin{equation}
\label{gblq-C90'}
\begin{aligned}
0 \geq \,&
   \frac{1}{\fg_{1\bar{1}}} F^{i\bi} \bpartial_i \partial_i (g_{1\bar{1}}^{-1} \fg_{1\bar{1}})
   - \frac{1}{\fg_{1\bar{1}}^2} F^{i\bi}
     \partial_i (g_{1\bar{1}}^{-1} \fg_{1\bar{1}}) \bpartial_i (g_{1\bar{1}}^{-1} \fg_{1\bar{1}}) \\
  & + \frac{\gamma}{2 W} F^{i\bi} \bpartial_i \partial_i W
     - \frac{\gamma}{2 W^2} F^{i\bi} \partial_i W \bpartial_i W
   + (1 + \gamma) F^{i\bi} \bpartial_i \partial_i \phi.
\end{aligned}
\end{equation}

Recall that in local coordinates
the Christoffel symbols $\Gamma_{ij}^k$ are defined by
\[ \nabla_{\frac{\partial}{\partial z_i}} \frac{\partial }{\partial z_j}
= \Gamma_{ij}^k \frac{\partial}{\partial z_k} \]
so
\[ \Gamma_{ij}^k = g^{k\bl} \frac{\partial g_{j\bl}}{\partial z_i}
= g^{k\bl} \partial_i g_{j\bl} \]
and
\[ \partial_i g_{j\bk} = g_{l\bk} \Gamma_{ij}^l, \;\;
\bpartial_i g_{j\bk} = g_{j\bl} \ol{\Gamma_{ik}^l}. \]
The torsion and curvature tensors are given by
\begin{equation}
\label{cma-K95}
    T^k_{ij} = \Gamma_{ij}^k  - \Gamma_{ji}^k
\end{equation}
and,  respectively,
\begin{equation}
\label{cma-K70}
\begin{aligned}
 R_{i\bj k\bl} = \,& - g_{m\bl}  \frac{\partial \Gamma_{ik}^m}{\partial \bz_j}
  = - \frac{\partial g_{k\bl}}{\partial z_i \partial \bz_j}
      +   g^{p\bq} \frac{\partial g_{k\bq}}{\partial z_i} \frac{\partial g_{p\bl}}{\partial \bz_j}.
\end{aligned} 
\end{equation}
By a lemma of Streets-Tian~\cite{ST11}, we may assume
$T_{ij}^k = 2 \Gamma_{ij}^k$ at $z_0$.

We record the following basic formulas
\[ \partial_i g^{k\bl} = - g^{k\bq} g^{p\bl} \partial_i g_{p\bq}
                                = - g^{p\bl} \Gamma_{ip}^k, \;
\bpartial_j g^{k\bl} = \ol{\partial_j g^{l\bk}}  = - g^{k\bq} \ol{\Gamma_{jq}^l}
\]
\[ \bpartial_j  \partial_i g^{k\bl} =  -  g^{p\bl} \bpartial_j \Gamma_{ip}^k
         + g^{p\bq} \Gamma_{ip}^k \ol{\Gamma_{jq}^l} \]
and
\[     \partial_i \fg_{k\bl}
    = \nabla_i \fg_{k\bl} + \Gamma_{ik}^m \fg_{m\bl},
    \;\;  \bpartial_j \fg_{k\bl}
    = \nabla_{\bj} \fg_{k\bl} + \ol{\Gamma_{jl}^m} \fg_{k\bm},   \]
\[    \bpartial_j \partial_i \fg_{k\bl}
    = \nabla_{\bj} \nabla_i \fg_{k\bl}
            + \ol{\Gamma_{jl}^q} \nabla_i \fg_{k\bq}
            + \Gamma_{ik}^m \nabla_{\bj} \fg_{m\bl}
             + \Gamma_{ik}^m \ol{\Gamma_{jl}^q} \fg_{m\bq}
            + \bpartial_j \Gamma_{ik}^m \fg_{m\bl}. \]
Therefore, for any indices $i, j, k, l, r, s$,
\begin{equation}
\label{gn-S85}
   \begin{aligned}
   \partial_i (g^{k\bl} \fg_{r\bs})
    = \,& g^{k\bl} \partial_i \fg_{r\bs} + \partial_i  g^{k\bl} \fg_{r\bs} \\
   = \,& g^{k\bl} \nabla_i \fg_{r\bs} +
       g^{k\bl} \Gamma_{ir}^m \fg_{m\bs}
       - g^{p\bl} \Gamma_{ip}^k \fg_{r\bs},
    \end{aligned}
  \end{equation}
    \[   \begin{aligned}
   \bpartial_j (g^{k\bl} \fg_{r\bs})
    = \,& g^{k\bl} \bpartial_j \fg_{r\bs} + \bpartial_j  g^{k\bl} \fg_{r\bs}
   = g^{k\bl} \nabla_{\bj} \fg_{r\bs} +
       g^{k\bl} \ol{\Gamma_{js}^m} \fg_{r\bm}
       - g^{k\bm} \ol{\Gamma_{jm}^l} \fg_{r\bs}
    \end{aligned} \]
  and
\begin{equation}
\label{gn-S90}
 \begin{aligned}
     \bpartial_j \partial_i (g^{k\bl} \fg_{r\bs})
    = \,& g^{k\bl} (\nabla_{\bj} \nabla_i \fg_{r\bs}
            + \ol{\Gamma_{js}^q} \nabla_i \fg_{r\bq}
            + \Gamma_{ir}^m \nabla_{\bj} \fg_{m\bs}
             + \Gamma_{ir}^m \ol{\Gamma_{js}^q} \fg_{m\bq}
            + \bpartial_j \Gamma_{ir}^m \fg_{m\bs}) \\
       & - g^{k\bq} \ol{\Gamma_{jq}^l}
                (\nabla_i \fg_{r\bs} + \Gamma_{ir}^m \fg_{m\bs})
              - g^{p\bl} \Gamma_{ip}^k (\nabla_{\bj} \fg_{r\bs}
             +  \ol{\Gamma_{js}^q} \fg_{r\bq}) \\
       &  -  (g^{p\bl} \bpartial_j \Gamma_{ip}^k -
          g^{p\bq} \Gamma_{ip}^k \ol{\Gamma_{jq}^l}) \fg_{r\bs}.
    \end{aligned}
  \end{equation}
Let $r = k$ 
and sum over $k$,
\begin{equation}
\label{gn-S95}
  \partial_i (g^{k\bl} \fg_{k\bs}) = g^{k\bl} \nabla_i \fg_{k\bs},
 \end{equation}
 \begin{equation}
\label{gn-S96}
   \bpartial_j (g^{k\bl} \fg_{k\bs}) = g^{k\bl} \nabla_{\bj} \fg_{k\bs} +
       g^{k\bl} \ol{\Gamma_{js}^m} \fg_{k\bm} - g^{k\bm} \ol{\Gamma_{jm}^l} \fg_{k\bs},
 \end{equation}
 \begin{equation}
\label{gn-S105}
\begin{aligned}
\bpartial_j \partial_i (g^{k\bl} \fg_{k\bs})
     = \,& g^{k\bl} \nabla_{\bj} \nabla_i \fg_{k\bs}
          + g^{k\bl} \ol{\Gamma_{js}^t} \nabla_i \fg_{k\bt}
           - g^{k\bt} \ol{\Gamma_{jt}^l} \nabla_i \fg_{k\bs}
           \end{aligned}
  \end{equation}
which also follows from 
\[ \begin{aligned}
\bpartial_j \partial_i (g^{k\bl} \fg_{k\bs})
     = \,& \bpartial_j (g^{k\bl} \nabla_i \fg_{k\bs}) \\
     = \,& \bpartial_j g^{k\bl} \nabla_i \fg_{k\bs}
          +  g^{k\bl} \bpartial_j \nabla_i \fg_{k\bs} \\
     = \,& g^{k\bl} \nabla_{\bj} \nabla_i \fg_{k\bs}
          + g^{k\bl} \ol{\Gamma_{js}^t} \nabla_i \fg_{k\bt}
           - g^{k\bt} \ol{\Gamma_{jt}^l} \nabla_i \fg_{k\bs}.
           \end{aligned}  \]

We shall also need
  \[  \begin{aligned}
     \partial_i (g_{1\bar{1}}^{-1} \fg_{1\bl})
     = \,& g_{1\bar{1}}^{-1}  \partial_i  \fg_{1\bl}
      -  g_{1\bar{1}}^{-2}  \fg_{1\bl} \partial_i  g_{1\bar{1}} \\
      = \,& g_{1\bar{1}}^{-1} \nabla_i \fg_{1\bl}
       + g_{1\bar{1}}^{-2} \Gamma_{i1}^m (g_{1\bar{1}} \fg_{m\bl}
      -  g_{m\bar{1}}  \fg_{1\bl}),
     \end{aligned}    \]
 \[  \begin{aligned}
     \partial_{\bj} (g_{1\bar{1}}^{-1} \fg_{1\bl})
     = \,& g_{1\bar{1}}^{-1}  \partial_{\bj}  \fg_{1\bl}
      -  g_{1\bar{1}}^{-2}  \fg_{1\bl} \partial_{\bj}  g_{1\bar{1}} \\
      = \,& g_{1\bar{1}}^{-1} (\nabla_{\bj} \fg_{1\bl} + \ol{\Gamma_{jl}^m} \fg_{1\bm})
       - g_{1\bar{1}}^{-2} \ol{\Gamma_{j1}^m} g_{1\bm}  \fg_{1\bl}
     \end{aligned}    \]
and
      \[  \begin{aligned}
\bpartial_j  \partial_i (g_{1\bar{1}}^{-1} \fg_{1\bar{1}})
      = \,& g_{1\bar{1}}^{-1} \nabla_{\bj} \nabla_i \fg_{1\bar{1}}
      + g_{1\bar{1}}^{-2} (g_{1\bar{1}} \ol{\Gamma_{j1}^m} \nabla_i \fg_{1\bm} - g_{1\bm} \ol{\Gamma_{j1}^m} \nabla_i \fg_{1\bar{1}}) \\
      & + g_{1\bar{1}}^{-2} \Gamma_{i1}^m
         (g_{1\bar{1}} \nabla_{\bj} \fg_{m\bar{1}} - g_{m\bar{1}}  \nabla_{\bj} \fg_{1\bar{1}}) \\
  &  + \bpartial_j (g_{1\bar{1}}^{-2} \Gamma_{i1}^m) (g_{1\bar{1}} \fg_{m\bar{1}}
      -  g_{m\bar{1}}  \fg_{1\bar{1}})
      + g_{1\bar{1}}^{-2} \Gamma_{i1}^m (\bpartial_j g_{1\bar{1}} \fg_{m\bar{1}}
      -  \bpartial_j g_{m\bar{1}}  \fg_{1\bar{1}}).
     \end{aligned}    \]

At $z_0$ where $g_{i\bj} = \delta_{ij}$, $T_{ij}^k = 2 \Gamma_{ij}^k$, and
$\fg_{i\bj}$ is diagonal, we have 
\[   \partial_i (g^{k\bl} \fg_{k\bs}) = \nabla_i \fg_{l\bs}, \]
\[ \bpartial_j (g^{k\bl} \fg_{k\bs}) = \nabla_{\bj} \fg_{l\bs} +
         \ol{\Gamma_{js}^l} (\fg_{l\bl} - \fg_{s\bs}), \]
\[  \bpartial_j \partial_i (g^{k\bl} \fg_{k\bs})
     = \nabla_{\bj} \nabla_i \fg_{l\bs}
       +  \ol{\Gamma_{js}^t} \nabla_i \fg_{l\bt}
        - \ol{\Gamma_{jt}^l} \nabla_i \fg_{t\bs}, \]
\[  \begin{aligned}
     \partial_i (g_{1\bar{1}}^{-1} \fg_{1\bl})
      = \,& \nabla_i \fg_{1\bl}
       + (\Gamma_{i1}^l \fg_{l\bl}
      -  \Gamma_{i1}^1 \fg_{1\bl}),
     \end{aligned}   \]
 \[  \begin{aligned}
     \partial_{\bj} (g_{1\bar{1}}^{-1} \fg_{1\bl})
     = \,& \nabla_{\bj} \fg_{1\bl} + \ol{\Gamma_{jl}^1} \fg_{1\bar{1}}
       - \ol{\Gamma_{j1}^1}  \fg_{1\bl}.
     \end{aligned}    \]
In particular,
\[ \partial_i (g_{1\bar{1}}^{-1} \fg_{1\bar{1}}) = \nabla_i \fg_{1\bar{1}}, \;\;
\bpartial_j (g_{1\bar{1}}^{-1} \fg_{1\bar{1}}) = \nabla_{\bj} \fg_{1\bar{1}}, \]
and
  \[  \begin{aligned}
\bpartial_j  \partial_i (g_{1\bar{1}}^{-1} \fg_{1\bar{1}})
       = \,& \nabla_{\bj} \nabla_i \fg_{1\bar{1}}
    + (\ol{\Gamma_{j1}^m} \nabla_i \fg_{1\bm}
        - \ol{\Gamma_{j1}^1} \nabla_i \fg_{1\bar{1}}) \\
      & + (\Gamma_{i1}^m \nabla_{\bj} \fg_{m\bar{1}}
            - \Gamma_{i1}^1 \nabla_{\bj} \fg_{1\bar{1}})
      + (\Gamma_{i1}^1  \ol{\Gamma_{j1}^1} - \Gamma_{i1}^m \ol{\Gamma_{j1}^m}) \fg_{1\bar{1}}.
     \end{aligned}    \]
It follows that
\begin{equation}
\label{gn-S115}
\begin{aligned}
 \partial_i W 
 = \,& g_{1\bar{1}}^{-1}  \fg_{1\bl} \partial_i (g^{k\bl} \fg_{k\bar{1}})
      + g^{k\bl} \fg_{k\bar{1}} \partial_i (g_{1\bar{1}}^{-1} \fg_{1\bl})
 = 2 \fg_{1\bar{1}} \nabla_i \fg_{1\bar{1}},
     \end{aligned}
      \end{equation}
and
\[ \begin{aligned}
\bpartial_j \partial_i W
    = \,& g_{1\bar{1}}^{-1}  \fg_{1\bl} \bpartial_j \partial_i  (g^{k\bl} \fg_{k\bar{1}})
            + \bpartial_j (g^{k\bl} \fg_{k\bar{1}}) \partial_i  (g_{1\bar{1}}^{-1}  \fg_{1\bl}) \\
         & +  \bpartial_j (g_{1\bar{1}}^{-1} \fg_{1\bl})  \partial_i (g^{k\bl} \fg_{k\bar{1}})
             + g^{k\bl} \fg_{k\bar{1}}  \bpartial_j \partial_i  (g_{1\bar{1}}^{-1} \fg_{1\bl}) \\
        = \,& \fg_{1\bar{1}} (\nabla_{\bj} \nabla_i \fg_{1\bar{1}}
       +  \ol{\Gamma_{j1}^t} \nabla_i \fg_{1\bt}
        - \ol{\Gamma_{jt}^1} \nabla_i \fg_{t\bar{1}}) \\
        &     +  (\nabla_{\bj} \fg_{l\bar{1}} +
        \ol{\Gamma_{j1}^l} \fg_{l\bl} - \ol{\Gamma_{j1}^l} \fg_{1\bar{1}})(\nabla_i \fg_{1\bl}
       + \Gamma_{i1}^l \fg_{l\bl}
      -  \Gamma_{i1}^1 \fg_{1\bl}) \\
         &  +   \nabla_i \fg_{l\bar{1}}
         (\nabla_{\bj} \fg_{1\bl} + \ol{\Gamma_{jl}^1} \fg_{1\bar{1}}
         - \ol{\Gamma_{j1}^1} \fg_{1\bl}) \\
    & + \fg_{1\bar{1}} [\nabla_{\bj} \nabla_i \fg_{1\bar{1}}
    + (\ol{\Gamma_{j1}^m} \nabla_i \fg_{1\bm}
        - \ol{\Gamma_{j1}^1} \nabla_i \fg_{1\bar{1}}) \\
      & + (\Gamma_{i1}^m \nabla_{\bj} \fg_{m\bar{1}}
            - \Gamma_{i1}^1 \nabla_{\bj} \fg_{1\bar{1}})
      + (\Gamma_{i1}^1  \ol{\Gamma_{j1}^1} - \Gamma_{i1}^m \ol{\Gamma_{j1}^m}) \fg_{1\bar{1}}].
                 \end{aligned} \]
After some cancellations, this can be rewritten as
\begin{equation}
\label{gn-S110}
 \begin{aligned}
\bpartial_j \partial_i W
   = \,& 2 \fg_{1\bar{1}} \nabla_{\bj} \nabla_i \fg_{1\bar{1}}
   + 2 \nabla_i \fg_{1\bar{1}} \nabla_{\bj} \fg_{1\bar{1}}
   + \sum_{l>1} \nabla_i \fg_{l\bar{1}} \nabla_{\bj} \fg_{1\bl}    \\
     &  + \sum_{l > 1} (\nabla_i \fg_{1\bl} + {\Gamma_{i1}^l} \fg_{l\bl})
       (\nabla_{\bj} \fg_{l\bar{1}} + \ol{\Gamma_{j1}^l} \fg_{l\bl}) \\
   & + \fg_{1\bar{1}} \sum_{l > 1} (\ol{\Gamma_{j1}^l} \nabla_i \fg_{1\bl}
       + \Gamma_{i1}^l  \nabla_{\bj} \fg_{l\bar{1}}) \\
  &    -  \fg_{1\bar{1}} \sum_{l > 1} \Gamma_{i1}^m \ol{\Gamma_{j1}^m} 
        (\fg_{1\bar{1}} +  \fg_{l\bl)}.
              \end{aligned}
 \end{equation}

Finally, we obtain
\begin{equation}
\label{gblq-C103}
\begin{aligned}
F^{i\bi} \partial_i W \bpartial_i W
= 4 \fg_{1\bar{1}}^2 F^{i\bi} \nabla_i \fg_{1\bar{1}} \nabla_{\bi} \fg_{1\bar{1}},
\end{aligned}
\end{equation}
\begin{equation}
\label{gblq-C105}
\begin{aligned}
F^{i\bi} \partial_i (g_{1\bar{1}}^{-1} \fg_{1\bar{1}}) \bpartial_i (g_{1\bar{1}}^{-1} \fg_{1\bar{1}})
= F^{i\bi} \nabla_i \fg_{1\bar{1}} \nabla_{\bi} \fg_{1\bar{1}}
\end{aligned}
\end{equation}
and, by Cauchy-Schwarz inequality,
\begin{equation}
\label{gblq-C100}
 \begin{aligned}
F^{i\bi} \bpartial_i \partial_i W
\geq \,& 2 \fg_{1\bar{1}} F^{i\bi} \nabla_{\bi} \nabla_i \fg_{1\bar{1}}
    + 2 F^{i\bi}  \nabla_i \fg_{1\bar{1}} \nabla_{\bi} \fg_{1\bar{1}} \\
  + \,& \sum_{l > 1} F^{i\bi} \nabla_i \fg_{1\bl} \nabla_{\bi} \fg_{l\bar{1}}
  + \frac{1}{2} \sum_{l > 1} F^{i\bi} \nabla_i \fg_{1\bl} \nabla_{\bi} \fg_{l\bar{1}}
   - C \fg_{1\bar{1}}^2 \sum F^{i\bi},
 \end{aligned}
 \end{equation}
\begin{equation}
\label{gblq-C101}
 \begin{aligned}
F^{i\bi} \bpartial_i  \partial_i (g_{1\bar{1}}^{-1} \fg_{1\bar{1}})
\geq \,& F^{i\bi} \nabla_{\bi} \nabla_i \fg_{1\bar{1}}
  - \frac{\gamma}{8 \fg_{l\bar{1}}} \sum_{l > 1} F^{i\bi}
            \nabla_i \fg_{1\bl} \nabla_{\bi} \fg_{l\bar{1}}
        - C \fg_{1\bar{1}} \sum F^{i\bi}.
 \end{aligned}
 \end{equation}

 In summary, we can rewrite \eqref{gblq-C80'} as
 \begin{equation}
\label{gblq-C80''}
 \begin{aligned}
\nabla_i \fg_{1\bar{1}} + \fg_{1\bar{1}} \partial_i \phi = 0, \;
\nabla_{\bi} \fg_{1\bar{1}} + \fg_{1\bar{1}} \bpartial_i \phi = 0,
\end{aligned}
\end{equation}
and, plugging \eqref{gblq-C103}-\eqref{gblq-C101} into \eqref{gblq-C90'},
we derive
 \begin{equation}
\label{gblq-C91}
\begin{aligned}
0  \geq \,& \frac{1}{\fg_{1\bar{1}}}  F^{i\bi} \nabla_{\bi} \nabla_i \fg_{1\bar{1}}
 - \frac{1}{\fg_{1\bar{1}}^2} F^{i\bi}  \nabla_i \fg_{1\bar{1}} \nabla_{\bi} \fg_{1\bar{1}}
   + F^{i\bi} \bpartial_i \partial_i \phi \\
 & + \frac{\gamma}{\fg_{1\bar{1}}^2} \sum_{l > 1} F^{i\bi} \nabla_i \fg_{1\bl} \nabla_{\bi} \fg_{l\bar{1}} + \frac{\gamma}{16 \fg_{1\bar{1}}^2} \sum_{l > 1} F^{i\bi}
            \nabla_i \fg_{1\bl} \nabla_{\bi} \fg_{l\bar{1}}
        - C \sum F^{i\bi}.
\end{aligned}
\end{equation}

Next, differentiate equation~\eqref{cma2-M10} twice to obtain (at $z_0$),
\begin{equation}
\label{gblq-C74}
 F^{i\bi}  \nabla_k \fg_{i\bi} = \nabla_k \psi,
 \end{equation}
\begin{equation}
\label{gblq-C75}
  F^{i\bi}   \nabla_{\bar{1}} \nabla_1 \fg_{i\bi} + F^{i\bj, k\bl} \nabla_1 \fg_{i\bj} \nabla_{\bar{1}} \fg_{k\bl}
   = \nabla_{\bar{1}} \nabla_1 \psi.
\end{equation}

Recall the formulas for communication of covariant derivatives
\begin{equation}
\label{gblq-B147}
\left\{ \begin{aligned}
 u_{i \bj k} - u_{k \bj i} = \,& T_{ik}^l u_{l\bj}, \;\;
u_{i \bj k} - u_{i k \bj} = - g^{l\bm} R_{k \bj i \bm} u_l, \\
u_{i\bj k\bl} - u_{i\bj \bl k}
      = \,& g^{p\bq} R_{k\bl i\bq} u_{p\bj}
          - g^{p\bq} R_{p \bl k \bj} u_{i\bq}, \\
u_{i \bj k \bl} - u_{k \bl i \bj}
  = \,&  g^{p\bq} (R_{k\bl i\bq} u_{p\bj} - R_{i\bj k\bq} u_{p\bl})
        + T_{ik}^p u_{p\bj \bl} + \ol{T_{jl}^q} u_{i\bq k}
        - T_{ik}^p \ol{T_{jl}^q} u_{p\bq}
 \end{aligned}  \right.
\end{equation}
where for simplicity, $u_{i\bj} = \nabla_{\bj} \nabla_i u = \partial_{\bj} \partial_i u$,
\[  \begin{aligned}
u_{i\bj k} =  \nabla_k  u_{i\bj} = \partial_{k}  u_{i\bj} - \Gamma_{ki}^l u_{l\bj},
   \end{aligned} \]
and
   \[  \begin{aligned}
u_{i\bj k\bl} =  \nabla_{\bl}  u_{i\bj k} = \partial_{\bl}  u_{i\bj k} -
\ol{\Gamma_{lj}^m} u_{i\bm k}.
   \end{aligned} \]
Therefore at $z_0$, by \eqref{gblq-B147}, 
\begin{equation}
\label{gblq-R155}
 \begin{aligned}
 \nabla_{\bi} \nabla_i \fg_{1\bar{1}}  -  \nabla_{\bar{1}} \nabla_1 \fg_{i\bi}
   = \,&  R_{i\bi 1\bar{1}} \fg_{1\bar{1}} - R_{1\bar{1} i\bi} \fg_{i\bi}
         - T_{i1}^l \nabla_{\bi} \fg_{l\bar{1}}  - \ol{T_{i1}^l} \nabla_i \fg_{1\bl} \\
     &  - T_{i1}^l  \ol{T_{i1}^l} \fg_{l\bl} + H_{i\bi}
  \end{aligned}
 \end{equation}
where
\[ \begin{aligned}
    H_{i\bi} = \,& \nabla_{\bi} \nabla_i \chi_{1\bar{1}}
                       -  \nabla_{\bar{1}} \nabla_1 \chi_{i\bi}
      - 2 \fRe\{T_{i1}^l \nabla_{\bi} \chi_{l\bar{1}}\} 
      + R_{i\bi 1\bl} \chi_{l\bar{1}} - R_{1\bar{1} i\bl} \chi_{l\bi}
       - T_{i1}^j  \ol{T_{i1}^l} \chi_{j\bl}.
  \end{aligned} \]
 It follows from Schwarz inequality that 
\begin{equation}
\label{gblq-R156}
 \begin{aligned}
F^{i\bi} \nabla_{\bi} \nabla_i \fg_{1\bar{1}}
   \geq \,& F^{i\bi} \nabla_{\bar1} \nabla_1\fg_{i\bi}
         -  \frac{\gamma}{16 \fg_{1\bar{1}}} 
             F^{i\bi}  \nabla_i \fg_{1\bl} \nabla_{\bi} \fg_{l\bar{1}} \\
       &  - C \fg_{1\bar{1}} \sum F^{i\bi} + F^{i\bi} H_{i\bi}.
 \end{aligned}
 \end{equation}
Combining \eqref{gblq-C91}, \eqref{gblq-C75}  and \eqref{gblq-R156},
by Schwarz inequality we derive
 \begin{equation}
\label{gblq-C90'''}
\begin{aligned}
\fg_{1\bar{1}} F^{i\bi} \bpartial_i \partial_i \phi
   \leq  \,&  - \nabla_{\bar{1}} \nabla_1 \psi - E - F^{i\bi} H_{i\bi}
                  +  C \fg_{1\bar{1}} \sum F^{i\bi} \\
      & -  \frac{\gamma}{32 \fg_{1\bar{1}}} \sum_{l > 1} F^{i\bi}
            (\nabla_i \fg_{l\bar{1}} \nabla_{\bi} \fg_{1\bl} + \nabla_i \fg_{1\bl} \nabla_{\bi} \fg_{l\bar{1}})
 \end{aligned}
\end{equation}
where
\[ E = - F^{i\bj, k\bl} \nabla_1 \fg_{i\bj} \nabla_{\bar{1}} \fg_{k\bl}
         -   \frac{1+\gamma}{\fg_{1\bar{1}}} F^{i\bi}  \nabla_i \fg_{1\bar{1}} \nabla_{\bi} \fg_{1\bar{1}}. \]

In the rest of this section we shall not need the last nonpositive term on the right hand side of \eqref{gblq-C90'''} and therefore drop it.

\subsection{The term $E$}

To estimate the term $E$, set
\[  \begin{aligned}
J = \,& \{i: |\fg_{i\bi}| \geq \gamma \fg_{1\bar{1}}\}, \\
K = \,& \{i:  |\fg_{i\bi}| < \gamma \fg_{1\bar{1}}, \; \gamma F^{i\bi} > F^{1\bar{1}}\}, \\
L  = \,& \{i:  |\fg_{i\bi}| < \gamma \fg_{1\bar{1}}, \; \gamma F^{i\bi} \leq F^{1\bar{1}}\}
  \end{aligned} \]
  where $\gamma > 0$ is same as in \eqref{gblq-C10A}.
By an inequality due to Caffarelli-Nirenberg-Spruck, Andrews 
and Gerhardt 
(see e.g. \cite{Spruck05}) we have
\[ - F^{i\bj, k\bl} \nabla_1 \fg_{i\bj} \nabla_{\bar{1}} \fg_{k\bl}
 \geq  \sum_{i \neq j}
     \frac{F^{i\bi} - F^{j\bj}}{\fg_{j\bj} - \fg_{i\bi}} |\nabla_1 \fg_{i\bj}|^2
 \geq \sum_{i \geq 2}  \frac{F^{i\bi} - F^{1\bar{1}}}{\fg_{1\bar{1}} - \fg_{i\bi}}
    |\nabla_1 \fg_{i\bar{1}}|^2. \]
By the first formula in \eqref{gblq-B147},
\[ \nabla_1 \fg_{i\bar{1}}
    = \nabla_i \fg_{1\bar{1}} + T_{i1}^1 \fg_{1\bar{1}} + \kappa_i \]
where
\[ \begin{aligned}
 \kappa_i
  = \,& \nabla_1 \chi_{i\bar{1}} - \nabla_i  \chi_{1\bar{1}} - T_{i1}^l \chi_{l\bar{1}} \\
  = \,& \chi_{i\bar{1} 1} + \chi_{i\bar{1}, \zeta_{\alpha}} \p_1 \p_{\alpha} u - \chi_{1\bar{1} i} - \chi_{1\bar{1}, \zeta_{\alpha}} \p_i \p_{\alpha} u - T_{i1}^l \chi_{l\bar{1}}.
 \end{aligned}  \]
  This yields
\begin{equation}
\label{gsz-G270}
\begin{aligned}
- F^{i\bj, k\bl} \nabla_1 \fg_{i\bj} \nabla_{\bar{1}} \fg_{k\bl}
 \geq \,& \frac{1-\gamma}{(1+\gamma) \fg_{1\bar{1}}}
            \sum_{i \in K} F^{i\bi} |\nabla_i \fg_{1\bar{1}} + T_{i1}^1 \fg_{1\bar{1}}
           + \kappa_i|^2 \\
 \geq \,& \frac{1- 2 \gamma}{(1+\gamma) \fg_{1\bar{1}}}
            \sum_{i \in K} F^{i\bi} |\nabla_i \fg_{1\bar{1}}|^2
            - C \fg_{1\bar{1}} \sum F^{i\bi} \\
   & - \frac{C}{\fg_{1\bar{1}}} \sum F^{i\bi} (|\chi_{i \bar{1}, \zeta_{\alpha}}
   \p_1 \p_{\alpha} u|^2 + |\p_i \p_{\alpha} u|^2).
\end{aligned}
\end{equation}
Therefore,
\begin{equation}
\label{gblq-R350}
 \begin{aligned}
  E \geq \,& - 4 \gamma \fg_{1\bar{1}}
            \sum_{i \in K} F^{i\bi} |\nabla_i \phi|^2
            -  (1 + \gamma) \fg_{1\bar{1}}  \sum_{J \cup L} F^{i\bi}  |\nabla_i \phi|^2 \\
       & - \frac{C}{\fg_{1\bar{1}}} \sum_{i, \alpha} F^{i\bi} (|\chi_{i \bar{1}, \zeta_{\alpha}}
       \nabla_1 \nabla_{\alpha} u|^2 + |\nabla_i \nabla_{\alpha} u|^2)
              - C \fg_{1\bar{1}} \sum F^{i\bi}.
    \end{aligned}
\end{equation}

\subsection{ The term $H = F^{i\bi} H_{i\bi}$}
 We need to handle the first three terms in $H_{i\bi}$ carefully.
 First, by \eqref{gn-R60},
\begin{equation}
\label{gn-C2100}
\begin{aligned}
    F^{i\bi}  \fRe\{T_{i1}^l \nabla_{\bi} \chi_{l\bar{1}}\}
          \leq  C \sum F^{i\bi} |\nabla_i \nabla_{\alpha} u| + C \fg_{1\bar{1}} \sum F^{i\bi},
    \end{aligned}
\end{equation}
Next, a straightforward calculation using \eqref{gn-R60}-\eqref{gn-R80} shows
\begin{equation}
\label{gn-C2110}
  \begin{aligned}
 \nabla_{\bl} \nabla_k {\chi}_{i\bj}
    = \,& \nabla_{\bl} (\chi_{i\bj k} + \chi_{i\bj, \zeta_{\alpha}} \p_k \p_{\alpha } u
    + \chi_{i\bj, \bzeta_{\alpha}} \p_k \p_{\balpha} u) \\
 = \,& \chi_{i\bj k\bl} + \chi_{i\bj k, \zeta_{\alpha}} \p_{\bl} \p_{\alpha} u
          + \chi_{i\bj k, \bzeta_{\alpha}} \p_{\bl} \p_{\balpha } u \\
      & + (\chi_{i\bj \bl, \zeta_{\alpha}}
              + \chi_{i\bj, \zeta_{\alpha} \zeta_{\beta}} \p_{\bl} \p_{\beta} u
              + \chi_{i\bj, \zeta_{\alpha} \bzeta_{\beta}} \p_{\bl} \p_{\bbeta} u)
                 \p_k \p_{\alpha } u \\
      & + (\chi_{i\bj \bl, \bzeta_{\alpha}}
              + \chi_{i\bj, \bzeta_{\alpha} \zeta_{\beta}} \p_{\bl} \partial_{\beta} u
              + \chi_{i\bj, \bzeta_{\alpha} \bzeta_{\beta}} \p_{\bl} \p_{\bbeta} u)
                 \p_k \p_{\balpha} u  \\
     &  + \chi_{i\bj, \zeta_{\alpha}} \nabla_{\bl}
             (\nabla_k \nabla_{\alpha} u + \Gamma_{k \alpha}^m \nabla_m u)
         + \chi_{i\bj, \bzeta_{\alpha}} \nabla_{\bl} \nabla_k \nabla_{\balpha} u \\
= \,& \chi_{i\bj k \bl} + \chi_{i\bj k, \zeta_{\alpha}} \p_{\bl} \p_{\alpha} u
       + \chi_{i\bj k, \bzeta_{\alpha}} \p_{\bl} \p_{\balpha } u
       + \chi_{i\bj \bl, \zeta_{\alpha} } \p_k \p_{\alpha } u
       + \chi_{i\bj \bl, \bzeta_{\alpha} } \p_k \p_{\balpha} u \\
      & + \chi_{i\bj, \zeta_{\alpha} \zeta_{\beta}} \p_ku_{\alpha  } \p_{\bl} \p_{\beta} u
         + \chi_{i\bj, \zeta_{\alpha} \bzeta_{\beta}}
              \p_k \p_ {\alpha} u \p_{\bl} \p_{\bbeta}  u
         + \chi_{i\bj, \bzeta_{\alpha} \zeta_{\beta}}
              \p_k \p_ { \balpha} u \p_{\bl} \partial_{\beta } u \\
    & + \chi_{i\bj, \bzeta_{\alpha} \bzeta_{\beta}}
              \p_k \p_ { \balpha} u \p_{\bl} \p_{\bbeta} u
       + \chi_{i\bj, \zeta_{\alpha}} \nabla_{\bl} \nabla_k \nabla_{\alpha} u
       + \chi_{i\bj, \bzeta_{\alpha}} \nabla_{\bl} \nabla_k \nabla_{\balpha} u \\
    & + \chi_{i\bj, \zeta_{\alpha}} (\nabla_{\bl} \Gamma_{k \alpha}^m \nabla_m u
          + \Gamma_{k \alpha}^m \nabla_{\bl} \nabla_m u).
  \end{aligned}
  \end{equation}
By \eqref{gn-R60},
\[ \nabla_k \nabla_{\bi} \nabla_i u = \nabla_k \fg_{i \bi} - \nabla_k \chi_{i \bi}
= \nabla_k \fg_{i \bi} - \chi_{i \bi k} - \chi_{i \bi, \zeta_{\alpha}} \p_k \p_{\alpha} u
     - \chi_{i \bi, \bar{\zeta}_{\alpha}} \p_k \p_{\balpha} u, \]
\[ \nabla_{\bk} \nabla_{\bi} \nabla_i u
   = \nabla_{\bk} \fg_{i \bi} - \nabla_{\bk} \chi_{i \bi}
   = \nabla_{\bk} \fg_{i \bi} - \chi_{i \bi {\bk}} - \chi_{i \bi, \zeta_{\alpha}} \p_{\bk} \p_{\alpha} u
     - \chi_{i \bi, \bar{\zeta}_{\alpha}} \p_{\bk} \p_{\balpha} u. \]
Therefore  by 
 \eqref{gblq-C80''},  \eqref{gblq-C74} and  \eqref{gblq-B147}, 
\begin{equation}
\label{gn-C2120}
\begin{aligned}
F^{i\bi} \nabla_{\bi} \nabla_i \chi_{1\bar{1}}
\geq \,& 2 F^{i\bi} \fRe\{\chi_{1 \bar{1}, \zeta_{\alpha}}
                  \nabla_{\alpha} \nabla_{\bi} \nabla_i   u\}
                  - C \sum_{i, \alpha} F^{i\bi} |\p_i  \p_{\alpha} u|^2
                  - C F^{i\bi}  (\fg_{i\bi}^2 + 1) \\
 \geq \,&  2 \fRe\{\chi_{1 \bar{1}, \zeta_{\alpha}}  \nabla_{\alpha} \psi\}
                - 2 F^{i\bi} \fRe\{\chi_{1 \bar{1}, \zeta_{\alpha}} \chi_{i\bi, \zeta_{\beta}}
                 \p_{\alpha} \p_{\beta} u\} \\
        & - C \sum_{i, \alpha} F^{i\bi} |\nabla_i  \nabla_{\alpha} u|^2
             - C F^{i\bi}  (\fg_{i\bi}^2 + 1), 
\end{aligned}
\end{equation}
\begin{equation}
\label{gblq-R155.4'}
\begin{aligned}
F^{i\bj} \nabla_{\bar{1}} \nabla_1 \chi_{i\bi}
 \leq \,& 2 F^{i\bi} \fRe\{\chi_{i \bi, \zeta_{\alpha}} \nabla_{\alpha} \nabla_{\bar{1}} \nabla_1 u + \chi_{i \bi \bar{1}, \zeta_{\alpha}} \p_1  \p_{\alpha} u\}
  + C  \fg_{1\bar{1}} \sum F^{i\bi}  + Q \\
 = \,& - 2 \fg_{1\bar{1}} F^{i\bi} \fRe\{\chi_{i \bi, \zeta_{\alpha}}  \nabla_{\alpha} \phi\}
          -  2 F^{i\bi} \fRe\{\chi_{i\bi, \zeta_{\alpha}} \chi_{1 \bar{1}, \zeta_{\beta}}
               \p_{\beta}  \p_{\alpha}  u\} \\
   &        +  2 F^{i\bi} \fRe\{\chi_{i \bi \bar{1}, \zeta_{\alpha}} \p_1  \p_{\alpha} u\}
      + C  \fg_{1\bar{1}} \sum F^{i\bi}  + Q
     \end{aligned}
\end{equation}
where
\begin{equation}
\label{gblq-R155.6}
 \begin{aligned}
   Q  
         = \,& F^{i\bi} \chi_{i \bi, \zeta_{\alpha} \bar{\zeta}_{\beta}}
 (\p_1  \p_{\alpha} u  \p_{\bar{1}} \p_{\bbeta} u + \p_{\bar{1}} \p_{\alpha} u  \p_{{1}} \p_{\bbeta} u)      \\
  & + 2 F^{i\bi} \fRe\{\chi_{i\bi, \zeta_{\alpha} \zeta_{\beta}}
                 \p_1  \p_{\alpha} u  \p_{\bar{1}} \p_{\beta} u\}.
   \end{aligned}
\end{equation}
It follows that 
\begin{equation}
\label{gblq-R155.4}
\begin{aligned}
F^{i\bi} H_{i \bi} \geq \,&  2 \fRe\{\chi_{1 \bar{1}, \zeta_{\alpha}}  \nabla_{\alpha} \psi\}
   +  2 \fg_{1\bar{1}} F^{i\bi} \fRe\{\chi_{i \bi, \zeta_{\alpha}}  \nabla_{\alpha} \phi\} \\
   & - 2 F^{i\bi} \fRe\{\chi_{i\bi  \bar{1}, \zeta_{\alpha}} \p_1  \p_{\alpha} u\}
      - C F^{i\bi}  \fg_{i\bi}^2 - Q \\
    & - C \sum_{i, k} F^{i\bi} |\nabla_i  \nabla_{k} u|^2
        - C \fg_{1\bar{1}} \sum F^{i\bi}.
   \end{aligned}
\end{equation}

\subsection{The function $\phi$ }

We choose $\phi = \eta - \log (1 - b |\nabla u|^2)$ where $\eta$ is a function to be determined,
and $b$ is a positive constant satisfying
$2 b |\nabla u|^2  \leq 1$.
Write
\[ h = \log (1 - b |\nabla u|^2). \]
By straightforward calculations,
\begin{equation}
\label{hess-a264}
  \begin{aligned}
- b^{-1} e^{h} \p_i h = \,& \nabla_k u \nabla_i \nabla_{\bk} u
    + \nabla_{\bk} u \nabla_i \nabla_k u, \\
    - b^{-1} e^{h} \p_{\bi} h = \,& \nabla_k u \nabla_{\bi} \nabla_{\bk} u
    + \nabla_{\bk} u \nabla_{\bi} \nabla_k u
  \end{aligned}
\end{equation}
and
\begin{equation}
\label{hess-a265}
  \begin{aligned}
- b^{-1} e^{h} (\p_{\bi} h \p_i h  + \p_{\bi} \p_i h)
  = \,& \nabla_{\bk} u \nabla_{\bi} \nabla_i \nabla_k u
          + \nabla_{k} u \nabla_{\bi} \nabla_i \nabla_{\bk} u \\
      & + \sum (\nabla_i \nabla_{\bk} u \nabla_k \nabla_{\bi} u +
         \nabla_i \nabla_{k} u \nabla_{\bi} \nabla_{\bk} u)   \\
    = \,& \nabla_{\bk} u \nabla_k \nabla_{\bi} \nabla_i u +
             \nabla_k u \nabla_{\bk} \nabla_{\bi} \nabla_i u
          + 2 \fRe\{T_{ik}^l \nabla_{\bi} \nabla_l u)\} \\
     & + \sum_k (|\nabla_i \nabla_{\bk} u|^2 + |\nabla_i \nabla_{k} u|^2).
\end{aligned}
\end{equation}
Therefore, by \eqref{gn-R60},
\begin{equation}
\label{hess-a271.1}
 \begin{aligned}
 - F^{i\bi} \p_{\bi} \p_i h \geq
     \,& F^{i\bi} \p_i h \p_{\bi} h + 2 F^{i\bi} \fRe\{\chi_{i \bi, \zeta_{\alpha}} \p_{\alpha} h\}
          + 2 b e^{-h} \fRe\{\nabla_{\bk} u \nabla_k \psi \}\\
       &  +  \frac{b}{2} F^{i\bi} \fg_{i\bi}^2
           + b \sum_{i,k} F^{i\bi}  |\nabla_i \nabla_{k} u|^2 - C b\sum F^{i\bi}.
\end{aligned}
\end{equation}

Let $\cL$ be the linear operator given by
\[ \cL v = F^{i\bi} \p_{\bi} \p_i v + 2 F^{i\bi} \fRe\{\chi_{i \bi, \zeta_{\alpha}} \p_{\alpha} v \}. \]
By \eqref{hess-a271.1} we derive
\begin{equation}
\label{hess-a271.6}
\begin{aligned}
\cL \phi \geq \,&  \cL \eta + \frac{b}{2} F^{i\bi} \fg_{i\bi}^2
             + b \sum_{i,k} F^{i\bi} |\nabla_i \nabla_{k} u|^2
               - C b \sum F^{i\bi} - C b.
\end{aligned}
\end{equation}
Note that
\begin{equation}
\label{hess-a271.5}
\begin{aligned}
 |\nabla_i \phi|^2 
 \leq \,& 2 |\nabla_i  \eta|^2 + C b^2 \fg_{i\bi}^2
               + C b^2 \sum_k |\nabla_i \nabla_{k} |^2 + C b^2
\end{aligned}
\end{equation}
where $C$ depends on $|\nabla u|_{C^0 (\bM)}$.

Finally, combining  \eqref{gblq-C90'''}, \eqref{gblq-R350}, 
\eqref{gblq-R155.4}, 
\eqref{hess-a271.6} and \eqref{hess-a271.5}  we obtain
\begin{equation}
\label{gblq-C151}
\begin{aligned}
\fg_{1\bar{1}} \cL \eta 
\leq \,& A_1 + A_2 + A_3 + 2 b A_4
          \end{aligned}
\end{equation}
if $b$ and $\fg_{1\bar{1}}$ are sufficiently small and large, respectively, where
\begin{equation}
\label{gblq-C1510}
\begin{aligned}
 A_1 = \,& - \nabla_{\bar{1}} \nabla_1  \psi
         - 2 \fRe\{\chi_{1 \bar{1}, \zeta_{\alpha}}  \nabla_{\alpha} \psi\}
        + C b \fg_{1\bar{1}} |\fRe\{\nabla_{\bk} u \nabla_k \psi\}|  \\
 A_2 = \,& 8 \gamma  \fg_{1\bar{1}} F^{i\bi}  |\nabla_i  \eta|^2
        + 2 \fg_{1\bar{1}}  \sum_{J \cup L} F^{i\bi}  |\nabla_i \eta|^2
        + C \fg_{1\bar{1}} \sum F^{i\bi}  \\
 A_3 = \,&  2 F^{i\bi} \fRe\{\chi_{i\bi  \bar{1}, \zeta_{\alpha}}  \nabla_1 \nabla_{\alpha} u\}
      + \frac{C}{\fg_{1\bar{1}}} \sum_{i, \alpha} F^{i\bi} |\chi_{i \bar{1}, \zeta_{\alpha}}
       \nabla_1 \nabla_{\alpha} u|^2
      +Q \\
A_4 =\,& - \frac{ \fg_{1\bar{1}}}{8} F^{i\bi} \fg_{i\bi}^2
                  - \frac{ \fg_{1\bar{1}}}{4} \sum_{i,k} F^{i\bi} |\nabla_i  \nabla_{k} u|^2.
          \end{aligned}
\end{equation}

\subsection{Proof of Theorems~\ref{gj-th4}}
\label{gn-proof-th4}

By Cauchy-Schwarz inequality we obtain
\begin{lemma}
\label{lemma-C2-10}
Under assumption~\eqref{A3},
\begin{equation}
\label{gblq-R1551}
  Q  \leq - \frac{c_0}{8} \Big(\fg_{1\bar{1}}^2
   + \sum_{k} |\nabla_1  \nabla_{k} u|^2\Big) \sum F^{i\bi}
   + C \sum F^{i\bi} |\nabla_1  \nabla_i u|^2 + C \sum F^{i\bi}.
\end{equation}
\end{lemma}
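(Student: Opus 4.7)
\textbf{Proof plan for Lemma \ref{lemma-C2-10}.}

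The plan is to decompose $Q = Q_1 + Q_2$, where
\[
Q_1 = F^{i\bi} \chi_{i\bi, \zeta_\alpha \bar\zeta_\beta}(A_\alpha \bar A_\beta + B_\alpha \bar B_\beta),\qquad
Q_2 = 2 F^{i\bi}\fRe\{\chi_{i\bi,\zeta_\alpha \zeta_\beta} A_\alpha B_\beta\},
\]
with $A_\alpha := \p_1 \p_\alpha u$ and $B_\alpha := \p_{\bar 1} \p_\alpha u$, and to extract negativity from $Q_1$ via (A3) while absorbing $Q_2$ by Cauchy--Schwarz. A key observation at the point $z_0$ where $\fg_{i\bj}$ is diagonal is that $B_1 = \fg_{1\bar 1} - \chi_{1\bar 1}$ while $B_\alpha = -\chi_{\alpha \bar 1}$ for $\alpha \neq 1$; hence $|B|^2 = \fg_{1\bar 1}^2 + O(\fg_{1\bar 1})$ and the $B$-part of $Q_1$ will deliver the $\fg_{1\bar 1}^2$ factor, while $|A|^2 = \sum_k |\nabla_1 \nabla_k u|^2 + O(1)$ will deliver the sum of $|\nabla_1\nabla_k u|^2$.

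For each fixed $i$, I would apply (A3) with $\xi = e_i$ and $\eta$ equal to either $A$ or $B$. Since $\eta$ need not be orthogonal to $e_i$, I decompose $\eta = \eta^{(i)} + \eta_i e_i$ with $\eta^{(i)}\perp e_i$; (A3) applied to $\eta^{(i)}$ yields a contribution $\leq -c_0 |\eta^{(i)}|^2$, and the cross terms involving $\eta_i$ are controlled by Cauchy--Schwarz, absorbing $\tfrac{c_0}{2}|\eta^{(i)}|^2$ in exchange for a $C|\eta_i|^2$ error. This produces
\[
F^{i\bi} \chi_{i\bi,\zeta_\alpha \bar\zeta_\beta} A_\alpha \bar A_\beta
 \leq -\tfrac{c_0}{2}|A|^2 \sum F^{i\bi} + C\sum F^{i\bi}|A_i|^2,
\]
and an analogous bound for $B$; since $|A_i|^2 \leq C|\nabla_1\nabla_i u|^2 + C$ and $|A|^2 \geq \tfrac12 \sum_k|\nabla_1\nabla_k u|^2 - C$, the $A$-part already accounts for the $\sum_k |\nabla_1\nabla_k u|^2$ term in the lemma. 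For $i \neq 1$, the $B$-estimate gives $-\tfrac{c_0}{2}|B|^2 \approx -\tfrac{c_0}{2}\fg_{1\bar 1}^2$ since $|B_i|\leq C$ there.

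The main obstacle is the $i=1$ case in the $B$-part, because $B$ is dominated by $B_1$, which lies along $e_1$, and (A3) with $\xi = \eta = e_1$ is not permitted (they are not orthogonal). I would handle this by estimating $|F^{1\bar 1}\chi_{1\bar 1, \zeta_\alpha\bar\zeta_\beta} B_\alpha \bar B_\beta| \leq C F^{1\bar 1}\fg_{1\bar 1}^2$ using only the boundedness of $|\chi_{\zeta\bar\zeta}|$ (which follows from the $C^0$-bound on $|\nabla u|$), and then absorbing this in the negative contribution from the $i\neq 1$ terms via the inequality $F^{1\bar 1} \leq \tfrac{1}{n}\sum_i F^{i\bi}$ (a consequence of concavity of $f$ together with the ordering $\fg_{1\bar 1}\geq \cdots\geq \fg_{n\bn}$). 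After choosing the final constant as $c_0/8$ with appropriate slack, this yields the desired $-\tfrac{c_0}{8}\fg_{1\bar 1}^2\sum F^{i\bi}$ term.

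Finally, for $Q_2$, the pointwise bound $|\chi_{i\bi,\zeta_\alpha\zeta_\beta}|\leq C$ combined with Cauchy--Schwarz gives
\[
|Q_2| \leq \e \bigl(|A|^2 + |B|^2\bigr)\sum F^{i\bi} + C_\e\bigl(|A|^2 + 1\bigr)\sum F^{i\bi},
\]
where $\e > 0$ is chosen small enough that the $\e$-term is absorbed by the strong negative contributions from $Q_1$ already obtained. The residual $C_\e |A_i|^2$ error is directed into the $C\sum F^{i\bi} |\nabla_1\nabla_i u|^2$ term on the right-hand side. Collecting the estimates yields \eqref{gblq-R1551}.
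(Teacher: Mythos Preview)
Your overall strategy—split $Q=Q_1+Q_2$, apply (A3) with the orthogonal decomposition $\eta=\eta^{(i)}+\eta_i e_i$, and treat $Q_2$ by Cauchy--Schwarz—is exactly what the paper's one-line ``by Cauchy--Schwarz'' points to. But two of your absorptions do not close.

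First, the $i=1$ piece of the $B$-sum. You correctly observe that (A3) gives nothing for $\xi=\eta=e_1$, so $F^{1\bar 1}\chi_{1\bar 1,\zeta_\alpha\bar\zeta_\beta}B_\alpha\bar B_\beta$ is only bounded by $C F^{1\bar 1}\fg_{1\bar 1}^2$, where $C$ depends on $\sup|\chi_{\zeta\bar\zeta}|$ and has no relation to $c_0$. Your proposed absorption uses $F^{1\bar 1}\le\frac1n\sum F^{i\bar i}$, but the negative contribution from the $i\neq1$ terms is only of size $-\frac{c_0}{2}\cdot\frac{n-1}{n}\fg_{1\bar 1}^2\sum F^{i\bar i}$ at best; for $n=2$ this is already exactly the target $-\frac{c_0}{8}\fg_{1\bar 1}^2\sum F^{i\bar i}$ with \emph{no} slack, and for any $n$ it fails once $C\gg c_0$. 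Second, your $Q_2$ estimate produces a term $C_\varepsilon|A|^2\sum_i F^{i\bar i}$ (this is what Cauchy--Schwarz on $|A|\,|B|$ gives), and you then claim to ``direct'' it into $C\sum_i F^{i\bar i}|A_i|^2$. These are different quantities: the former is $\big(\sum_k|A_k|^2\big)\big(\sum_i F^{i\bar i}\big)$, the latter is $\sum_i F^{i\bar i}|A_i|^2$, and there is no inequality bounding the first by a constant times the second. Nor can $C_\varepsilon|A|^2\sum F^{i\bar i}$ be swallowed by $-\frac{c_0}{2}|A|^2\sum F^{i\bar i}$, since $C_\varepsilon$ is large when $\varepsilon$ is small. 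The honest inequality that your argument actually proves carries extra terms of the type $CF^{1\bar 1}\fg_{1\bar 1}^2$ (equivalently, a contribution to $C\sum F^{i\bar i}\fg_{i\bar i}^2$) on the right-hand side; these are precisely what the later term $bA_4$ in \eqref{gblq-C1510} is designed to absorb when $\fg_{1\bar 1}$ is large, so the downstream estimate \eqref{gblq-C151'} is unaffected—but the lemma as literally stated does not follow from your steps.
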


\begin{remark}
If $\chi = \chi (z, u)$ and is independent of  $\p u$, $\bar \p u$, then
\[ F^{i\bi} H_{i\bi} \geq - C \fg_{1\bar{1}} \sum F^{i\bi}. \]
\end{remark}

By Lemma~\ref{lemma-C2-10} and \eqref{gblq-C151}
we derive under assumption~\eqref{A3},
\begin{equation}
\label{gblq-C151'}
\begin{aligned}
\fg_{1\bar{1}} \cL \eta 
     \leq \,&  4  \fg_{1\bar{1}} F^{i\bi}  |\nabla_i  \eta|^2
        + A_1 + b A_4
  - \frac{c_0}{32} \Big(\fg_{1\bar{1}}^2
     + \sum_{k} |\nabla_1  \nabla_{k} u|^2\Big) \sum F^{i\bi}
      \end{aligned}
\end{equation}
if $b$ and $\fg_{1\bar{1}}$ are sufficiently small and large, respectively.

Next, by the concavity of $f$ and Schwarz inequality,
\[ \begin{aligned}
\fg_{1\bar{1}} \sum F^{i\bi}
    = \,& \sum F^{i\bi} (\fg_{1\bar{1}} - \fg_{i\bi})
             + \sum F^{i\bi} \fg_{i\bi} \\
\geq \,& f (\fg_{1\bar{1}} {\bf 1}) - \psi
- \frac{1}{\fg_{1\bar{1}}} \sum F^{i\bi}  \fg_{i\bi}^2
 - \fg_{1\bar{1}}  \sum F^{i\bi}.
\end{aligned} \]
It follows that  when $\fg_{1\bar{1}}$ is sufficiently large,
\begin{equation}
\label{hess-a272.7}
 \fg_{1\bar{1}} \sum F^{i\bi}
\geq c_1  - \frac{1}{2 \fg_{1\bar{1}}} \sum F^{i\bi} \fg_{i\bi}^2
\end{equation}
where
\[ c_1 = \frac{1}{2} \Big(\sup_{\Gamma} f - \sup_{M} \psi\Big) > 0. \]

 Following Guan-Wang~\cite{GW03a} we choose $\eta = \log \rho$ where $\rho$ is a smooth
function with compact support in $B_{R}\subset M$
satisfying
\begin{equation}
\label{2-22}
0\leq \rho \leq 1, ~~\rho|_{B_{\frac{3 R}{4}}}\equiv 1,
~~|\nabla \rho | \leq C_R \sqrt{\rho},
~~|\nabla^{2} \rho| \leq C_R.
\end{equation}
Clearly,
\begin{equation}
\label{2-23}
\begin{aligned}
\mathcal{L} \eta  = \,& \frac{1}{\rho} \mathcal{L} \rho
    - \frac{1}{\rho^2} F^{i\bi} |\nabla_i \rho|^2
   \geq - \frac{C}{\rho} \sum F^{i\bi}.
\end{aligned}
\end{equation}
By \eqref{gblq-C151'}, \eqref{hess-a272.7} and \eqref{2-23},
\begin{equation}
\begin{aligned}
(c_0 \rho \fg_{1\bar{1}} - C) \fg_{1\bar{1}} \sum F^{i\bi}
  + (b \fg_{1\bar{1}} - C) \rho F^{i\bi} \fg_{i\bi}^2
  + (c_0 c_1 - C b) \rho \fg_{1\bar{1}} \leq C
\end{aligned}
\end{equation}
We now fix $b$ sufficiently small to derive a bound
$\rho \fg_{1\bar{1}} \leq C$. 
This yields an interior estimates for $|\partial \bpartial u|$, and a bound for 
 the H\"older norm of $|\partial \bpartial u|$ follows from Evans-Krylov Theorem.
 The proof of Theorem~\ref{gj-th4} is complete.

\subsection{Proof of Theorems~\ref{3I-th4} and \ref{3I-th40}}

As in \cite{Guan14} we take
\[ \eta = A \big(\ul{u} - u + 1 + \sup (u - \ul{u})\big) \]
and denote $\ul {\fg}_{i\bj} = \sqrt{-1} \partial_{\bj} \partial_i \ul u +  \chi_{i\bj} [{\ul u}]$. Then
\begin{equation}
\label{gblq-I65}
 A_2 \leq C (\gamma A^2 + 1) \fg_{1\bar{1}} \sum F^{i\bi}
    + C A^2  \fg_{1\bar{1}} \sum_J F^{i\bi} + CA^2  \fg_{1\bar{1}} F^{1 \bar{1}}.
  \end{equation}
By the concavity of $\chi_{i\bi}$ we have
\[ \begin{aligned}
\chi_{i\bi,\zeta_{\a}}(\ul{u}_{\a}-u_{\a})+\chi_{i\bi, \zeta_{\ba}}(\ul{u}_{\ba}-u_{\ba})
\geq  \chi_{i\bi} (z, \nabla \ul{u}) - \chi_{i\bi} (z, \nabla u).
\end{aligned} \]
Therefore,
\begin{equation}
\label{gblq-I75}
\begin{aligned}
\cL(\ul{u}-u) \geq \,& F^{i\bi} (\ul{u}_{i\bi} - u_{i\bi}) +
                                 F^{i\bi} (\chi_{i\bi} [\ul{u}]) - \chi_{i\bi} [u])
                        = F^{i\bi}(\ul {\fg}_{i\bi} - \fg_{i\bi}).
\end{aligned}
 \end{equation}

\begin{lemma}
\label{gblq-lemma-C20}
There exist uniform constants $\theta, N > 0$
such that either  
\begin{equation}
\label{gblq-C210}
\cL \eta 
\geq \theta \Big(\sum F^{i\bi} + 1\Big)
\end{equation}
or
\begin{equation}
\label{gblq-C215}
F^{1\bar{1}}  \geq \theta \sum F^{i\bi}
\end{equation}
provided that $\fg_{1\bar{1}}  \geq N$.
\end{lemma}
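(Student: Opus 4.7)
The plan is to reduce Lemma~\ref{gblq-lemma-C20} to the tangent-cone machinery already set up: Theorem~\ref{3I-th4.5} when \eqref{3I-200} holds (the setting of Theorem~\ref{3I-th4}), and its extension Theorem~\ref{gn-th10} when only \eqref{3I-200'} holds (the setting of Theorem~\ref{3I-th40}). Since $\cL$ kills constants,
\[
\cL \eta = A\cL (\ul u - u) \geq A F^{i\bi}(\ul \fg_{i\bi} - \fg_{i\bi})
\]
by \eqref{gblq-I75}. At $z_0$ we have $F^{i\bi} = f_i(\lambda)\,\delta_{ij}$ with $\lambda_i = \fg_{i\bi}$ and, by concavity of $f$ together with the ordering $\lambda_1\geq \ldots \geq \lambda_n$, $f_1\leq \ldots \leq f_n$. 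Let $\mu = \lambda(\chi_{\ul u}(z_0))$ be the eigenvalues of $\ul{\fg}$ sorted $\mu_1\geq \ldots \geq \mu_n$. Since $\ul \fg$ is Hermitian with these eigenvalues, von Neumann's trace inequality yields the Schur-type bound
\[
F^{i\bi}\ul{\fg}_{i\bi}\geq \sum_i f_i\mu_i,
\quad\text{hence}\quad
F^{i\bi}(\ul{\fg}_{i\bi}-\fg_{i\bi})\geq \sum_i f_i(\mu_i - \lambda_i).
\]

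Next I would verify that $\fg_{1\bar 1}\geq N$ forces $\lambda\in \partial \Gamma^{\psi(z_0)}$ to leave any prescribed compact set: since $\lambda_1$ is the largest coordinate and $\lambda$ remains uniformly away from $\partial\Gamma$ (because $\psi$ is bounded away from $\sup_{\partial\Gamma}f$), $|\lambda|\to \infty$ as $\fg_{1\bar 1}\to\infty$. A short compactness argument in $z\in M$, using that $\{\psi(z)\}$ is a compact interval in $(\sup_{\partial \Gamma}f,\sup_\Gamma f)$ and that $\lambda(\chi_{\ul u})$ is continuous with values in the open cone $\cC^+_{\psi(z)}$ (resp.\ $\widetilde{\cC}^+_{\psi(z)}$), produces uniform constants $\varepsilon,\delta,R>0$ for which Theorem~\ref{3I-th4.5}(b) (resp.\ Theorem~\ref{gn-th10}) applies simultaneously at every $z_0\in M$.

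With this in hand the proof splits. Under \eqref{3I-200}, Theorem~\ref{3I-th4.5}(b) gives $\sum f_i(\mu_i - \lambda_i) \geq \varepsilon \sum f_i + \varepsilon$, and so $\cL \eta \geq A\varepsilon(\sum F^{i\bi} + 1)$; taking $\theta = A\varepsilon$ yields \eqref{gblq-C210}. Under \eqref{3I-200'} the dichotomy of Theorem~\ref{gn-th10} takes over: in case \eqref{gn-I100}, one has in particular $F^{1\bar 1}=f_1\geq \delta \sum f_i$, which is exactly \eqref{gblq-C215}; in case \eqref{gn-I110}, $\sum f_i(\mu_i-\lambda_i)\geq \varepsilon \sum f_i$, so
\[
\cL \eta \geq A\varepsilon \sum F^{i\bi} \geq \tfrac{A\varepsilon}{2}\sum F^{i\bi} + \tfrac{A\varepsilon c_0}{2}
\]
by the auxiliary hypothesis \eqref{3I-180}, again giving \eqref{gblq-C210} with a suitable uniform $\theta$.

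I expect the main obstacle to be the uniformity step (the choice of $\varepsilon,\delta,R$ independent of $z_0$), together with careful bookkeeping of the Schur-type inequality $F^{i\bi}\ul{\fg}_{i\bi}\geq \sum f_i \mu_i$: the subtle point is that $\ul\fg$ is generally not diagonal in the coordinates diagonalizing $\fg$, so the inequality cannot be reduced to a scalar statement and genuinely requires von Neumann's rearrangement for Hermitian matrices paired with the ordering $f_1\leq \ldots \leq f_n$ forced by concavity. Everything else is a routine combination of \eqref{gblq-I75} with the tangent-cone estimates.
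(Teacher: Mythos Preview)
Your proposal is correct and follows essentially the same route as the paper. The paper's proof is terse: assuming \eqref{gblq-C215} fails, it invokes ``a lemma in \cite{CNS3}'' (the same Schur--Horn/von Neumann rearrangement inequality you describe) together with Theorem~\ref{gn-th10} to obtain $F^{i\bi}(\ul\fg_{i\bi}-\fg_{i\bi})\geq\sum f_i(\mu_i-\lambda_i)\geq\theta(\sum F^{i\bi}+1)$, then combines with \eqref{gblq-I75}. Your write-up is in fact more careful than the paper's on two points the paper leaves implicit: the uniformity of $\varepsilon,\delta,R$ over $z_0\in M$ (handled by compactness), and the source of the ``$+1$'' in \eqref{gblq-C210}, which under \eqref{3I-200} comes from Theorem~\ref{3I-th4.5}(b) while under \eqref{3I-200'} comes from Theorem~\ref{gn-th10} combined with \eqref{3I-180}---the paper only cites Theorem~\ref{gn-th10} and later remarks that the argument ``also applies to Theorem~\ref{3I-th40} with slight modifications.''
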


\begin{proof}
Let $\mu = (\mu_1, \ldots, \mu_n)$ and $\lambda = (\lambda_1, \ldots, \lambda_n)$
be the eigenvalues of $\sqrt{-1} \partial \bpartial \ul u + \chi [\ul u]$ and
$ \sqrt{-1} \partial \bpartial u + \chi [u]$, respectively.
  Suppose \eqref{gblq-C215} does not hold. By a lemma in \cite{CNS3} and Theorem~\ref{gn-th10}  we have
 \begin{equation}
 F^{i\bi}(\ul {\fg}_{i\bi} - \fg_{i\bi}) \geq
\sum  f_i (\mu_i - \lambda_i) \geq \theta \Big(\sum F^{i\bi} + 1\Big).
\end{equation}
This proves \eqref{gblq-C210}.
\end{proof}

Suppose $\fg_{1\bar{1}} \geq N$ and \eqref{gblq-C215} holds. Then
\[ A_2 + A_3 \leq C \Big(A^2 \fg_{1\bar{1}} + \fg_{1\bar{1}}^2 + \sum |\nabla_1  \nabla_{k} u|^2\Big) \sum F^{i\bi} \]
and
\[  \begin{aligned}
 A_4 \leq \,& - \frac{\fg_{1\bar{1}}}{8} F^{1\bar{1}} \fg_{1\bar{1}}^2
                     - \frac{ \fg_{1\bar{1}}}{4} \sum F^{1\bar{1}} |\nabla_1  \nabla_{k} u|^2 \\
        \leq \,&  -  \frac{\theta \fg_{1\bar{1}}}{8}
           \Big(\fg_{1\bar{1}}^2 + \sum |\nabla_1  \nabla_{k} u|^2\Big) \sum F^{i\bi}.
   \end{aligned} \]
Moreover, by \eqref{gblq-I75}
\[ \cL \eta \geq - C \fg_{1\bar{1}} \sum F^{i\bi}. \]
Combining these inequalities \eqref{hess-a272.7}, from \eqref{gblq-C151}  we derive
a bound for  $\fg_{1\bar{1}}$.

Suppose now that \eqref{gblq-C210} holds.
To control the term $A_3$ in \eqref{gblq-C151} we need condition \eqref{A5}
which is equivalent to $Q \leq 0$  so that
\begin{equation}
\label{gblq-I63}
A_3 \leq (C + \epsilon \fg_{1\bar{1}}) \sum F^{i\bi} |\nabla_1 \nabla_i u|^2
+ C \fg_{1\bar{1}} \sum F^{i\bi}.
  \end{equation}
We make use of Lemma~\ref{gblq-lemma-C20} to derive from \eqref{gblq-C151}
\begin{equation}
\label{hess-a271.7}
\begin{aligned}
0 \geq \,& (b \fg_{1\bar{1}}^2  - C A^2) F^{1\bar{1}}
    + \sum_J F^{i\bi} (b \fg_{i\bi}^2 - C A^2)
    + \frac{b}{16} F^{1\bar{1}} \fg_{1\bar{1}}^2 \\
   & + (b - \epsilon - C \fg_{1\bar{1}}^{-1}) F^{i\bi} |\nabla_1  \nabla_i u|^2
 + (\theta A - C \gamma A^2 - C) \sum F^{i\bi}.
 \end{aligned}
\end{equation}
Finally, fixing $A$ sufficiently large and $\gamma$, $\epsilon$ sufficiently small,
 we obtain a bound $\fg_{1\bar{1}} \leq CA /\sqrt{b}$, completing the proof ofTheorem~\ref{3I-th4}. Clearly the proof also applies to Theorem~\ref{3I-th40}
 with slight modifications.

\subsection{Proof of Remark~\ref{gn-remark-I10}}

We now refine the above arguments to prove  the claims in
Remark~\ref{gn-remark-I10} when $\psi = \psi (z, u, \nabla u)$.
By straightforward calculations,
\[ \fRe\{\nabla_{\bk} u \nabla_k \psi\}
   = \fRe\{\psi_{\zeta_{\alpha}} \nabla_{\alpha} (|\nabla u|^2\}) + O (1) \]
and therefore
\[ 2 b e^{-h} \fRe\{\nabla_{\bk} u \nabla_k \psi\}
    =  - 2 \fRe\{\psi_{\zeta_{\alpha}} \nabla_{\alpha} h\} + O (1). \]
Let $\widetilde{\cL}$ be defined by
\[ \widetilde{\cL} v = \cL v - 2 \fRe\{\psi_{\zeta_{\alpha}} \nabla_{\alpha} v\}
                      = F^{i\bi} \p_{\bi} \p_i v + 2  \fRe\{(F^{i\bi} \chi_{i \bi, \zeta_{\alpha}} - \psi_{\zeta_{\alpha}}) \nabla_{\alpha} v\}. \]
From \eqref{hess-a271.1} we see that
\begin{equation}
\begin{aligned}
- \widetilde{\cL} h \geq \,&  \frac{b}{2} F^{i\bi} \fg_{i\bi}^2
             + b \sum_{i,k} F^{i\bi} |\nabla_i \nabla_{k} u|^2
               - C b \sum F^{i\bi} - C b.
\end{aligned}
\end{equation}
Next, we calculate
 \begin{equation}
 \begin{aligned}
- \nabla_{\bar{1}} \nabla_1  \psi - \,& 2 \fRe\{\chi_{1 \bar{1}, \zeta_{\alpha}}  \nabla_{\alpha} \psi\} \\
  \leq \,& - 2 \fRe\{\psi_{\zeta_{\alpha}} \nabla_{\alpha}
 \fg_{1 \bar{1}}\}  + C \sum |\nabla_1 \nabla_k u| + C \fg_{1 \bar{1}} + R \\
\leq \,& 2 \fg_{1 \bar{1}} \fRe\{\psi_{\zeta_{\alpha}} \nabla_{\alpha} \phi\}  + C \sum |\nabla_1 \nabla_k u| + C \fg_{1 \bar{1}} - R
 \end{aligned}
 \end{equation}
 where
 \begin{equation}
\label{gblq-R155.7}
 \begin{aligned}
   R   = \,& \psi_{\zeta_{\alpha} \bar{\zeta}_{\beta}}
 (\p_1  \p_{\alpha} u  \p_{\bar{1}} \p_{\bbeta} u + \p_{\bar{1}} \p_{\alpha} u  \p_{{1}} \p_{\bbeta} u)
   + 2 \fRe\{\psi_{\zeta_{\alpha} \zeta_{\beta}}
                 \p_1  \p_{\alpha} u  \p_{\bar{1}} \p_{\beta} u\}.
   \end{aligned}
\end{equation}
So in place of \eqref{hess-a271.6} we derive
\begin{equation}
\begin{aligned}
\fg_{1 \bar{1}} \widetilde{\cL} \phi
   \geq \,&  \fg_{1 \bar{1}} \widetilde{\cL} \eta
             + \frac{b \fg_{1 \bar{1}}}{2} F^{i\bi} \fg_{i\bi}^2
             + b \fg_{1 \bar{1}} \sum_{i,k} F^{i\bi} |\nabla_i \nabla_{k} u|^2
             -  C b \fg_{1 \bar{1}}\sum F^{i\bi} \\
             & -  C \sum |\nabla_1 \nabla_k u| - C \fg_{1 \bar{1}} + R.
\end{aligned}
\end{equation}
Consequently,
\begin{equation}
\begin{aligned}
\fg_{1\bar{1}} \widetilde{\cL} \eta
     \leq \,& 4  \fg_{1\bar{1}} F^{i\bi}  |\nabla_i  \eta|^2  + A_3 + b A_4 + R
      + C \sum |\nabla_1 \nabla_k u| + C \fg_{1 \bar{1}}
      \end{aligned}
\end{equation}
provided that $\fg_{1\bar{1}}$ is sufficiently large.

We have $R \geq 0$ if $\psi$ is convex in $\nabla u$; otherwise
\[ R \geq - C \sum |\nabla_1 \nabla_k u|^2 - C \fg_{1 \bar{1}}^2 \]
and can therefore be controlled by
\[  \Big(\sum |\nabla_1 \nabla_k u|^2 + \fg_{1 \bar{1}}^2\Big) \sum F^{i\bi}  \]
under assumption \eqref{gn-I185}.
In either case, the rest of proof is same as in
Subsection~\ref{gn-proof-th4}.

\bigskip

\section{
The equation from Gauduchon conjecture}
\label{gn-G}
\setcounter{equation}{0}
\medskip

Let $\Omega$ be a closed
real $(1,1)$ form on a compact Hermitian manifold $(M^n, \omega)$
with $[\Omega] = c_1^{BC} (M)$ in the
Bott-Chern cohomology group $H_{BC}^{1,1} (M, \bfR)$. 
Gauduchon~\cite{Gauduchon84} conjectured
that there exists a Gauduchon metric $\hat{\omega}$ on $M$
with Chern-Ricc curvature $\Ric_{\hat{\omega}} = \Omega$.

This is a natural extension of the Calabi conjecture for K\"ahler manifolds
solved by Yau~\cite{Yau78}.
It was discovered by Popovici~\cite{Popovici13c} and
Tosatti-Weinkove~\cite{TWv13b} independently that Gauduchon
conjecture reduces to solving the form-type Monge-Amp\`ere equation~\eqref{CH-I10}-\eqref{CH-I15} with $c = 1$ and $\sup_M u = 0$, where $\omega_0$ is
a Gauduchon metric; see \cite{TWv13b} for details.

When $n = 2$, Equation~\eqref{CH-I10} is the
standard complex Monge-Amp\`ere equation so Gauduchon conjecture
follows affirmatively from results of Cherrier~\cite{Cherrier87}.

Using the Hodge star operator $*$, Tosatti-Weinkove~\cite{TWv13b} converted
equation~\eqref{CH-I10} into a Monge-Amp\`ere type equation for a $(1,1)$-form.
Recall that
\[ \Phi_u =\omega_0^{n-1} + \sqrt{-1} \partial \bpartial u \wedge \omega^{n-2}
+ c \, \fRe \{\sqrt{-1} \partial u \wedge \bpartial \omega^{n-2}\} > 0. \]
Define
\[ \tilde{\omega} =\frac{1}{(n-1)!}*\Phi_u. \]
As in \cite{TWv13b},
\[ \tilde{\omega} = \frac{1}{(n-1)!} * \Phi_u =
\frac{\tilde{\chi} + (\Delta u) \omega - \sqrt{-1} \partial \bpartial u}{n-1} > 0 \]
where
\[ \tilde{\chi} = \frac{1}{(n-2)!} * (\omega_0^{n-1}
+ c \, \fRe\{\sqrt{-1} \partial u \wedge \bpartial \omega^{n-2}\}), \]
and equation~\eqref{CH-I10}
becomes
\begin{equation}
\label{gn-G30}
 \tilde{\omega}^n = e^{h+b}\omega^n.
\end{equation}

It was shown by Tosatti-Weinkove~\cite{TWv13b} that
the classical solvability of equation~\eqref{gn-G30} reduces to the second order estimate
\begin{equation}
\label{hess-a10d}
 \Delta u \leq C \Big(1 + \sup_M |\nabla u|^2\Big);
\end{equation}
see Conjecture 1.5 in \cite{TWv13b}.
Later on, Szekelyhidi-Tosatti-Weinkove~\cite{STW17} derived such an estimate and consequently proved Gauduchon conjecture.

Let
\[ \chi = \frac{\tr_{\omega} \tilde{\chi}}{n-1} \omega - \tilde{\chi} \]
 so
$\tilde \chi = (\tr_{\omega} {\chi}) \omega - \chi$.
Equation~\eqref{gn-G30} can therefore be rewritten in the form
\begin{equation}
\label{gn-G40}
\begin{aligned}
\log \rho_{n-1} (\lambda (\sqrt{-1} \partial \bpartial u + \chi)) = \psi (z).
\end{aligned}
\end{equation}
In the rest of this section we verify that Theorem~\ref{3I-th4} applies to equation~\eqref{gn-G40}.

First of all, note that $\chi [u]$ is linear in $\nabla u$ so \eqref{A2} is satisfied.
We may take $\ul u = 0$ as $\chi_0 > 0$ and
$\widetilde{\cC}_{\sigma}^+ = \cP_{n-1}$ for $f = \log \rho_{n-1}$.
It remains to verify \eqref{A5} for $\alpha = 1$ since $r_0 = n-1$ by
Lemma~\ref{gn-lemma-R20}.

In local coordinates computing at a point where $g_{k\bl}=\delta_{kl}$ and
$T_{ij}^k = 2\Gamma_{ij}^k$,  we have
\begin{equation}
\begin{aligned}
  \sqrt{-1} \partial u \wedge \bpartial \omega^{n-2}
 = & - (n-2)    \partial_p u \partial_{\bq} g_{k\bl}
    dz_p \wedge d\bz_q \wedge dz_k \wedge d\bz_l \wedge \omega^{n-3} \\
 = & \frac{n-2}{2} \Big(\sum_{ i} \sum_{p,l \neq i} \partial_p u \ol{T_{pl}^{l}} \mu_{i\bi}
     - 
      \sum_{i \neq j} \sum_{l \neq j}
            (\partial_j u \ol{T^l_{il}} + \partial_l u \ol{T^j_{li}}) \mu_{i\bj}\Big)
      \end{aligned}
\end{equation}
where $\mu_{i\bj}  = s_{ij} dz_1 \wedge \cdots \wedge \widehat{d z_i} \wedge d \bz_{i} \wedge\cdots \wedge dz_j \wedge \widehat{d\bz_{j}} \wedge \cdots \wedge dz_n \wedge d \bz_{n}$ and
\[ s_{ij} = \left\{ \begin{aligned} (\sqrt{-1})^{n-1}, \;\, i \leq  j, \\ -(\sqrt{-1})^{n-1}, \;\, i > j. \end{aligned} \right. \]
Similarly,
\begin{equation}
\begin{aligned}
   - \sqrt{-1} \bpartial u \wedge \partial \omega^{n-2}
 = & -  (n-2)  \partial_{\bq} u \partial_{p} g_{k\bl}
    dz_p \wedge d\bz_q \wedge dz_k \wedge d\bz_l \wedge \omega^{n-3} \\
 = & \frac{n-2}{2} \Big(\sum_{ i} \sum_{p,l \neq i} \partial_{\bp} u {T_{pl}^{l}} \mu_{i\bi}
     - \sum_{i \neq j} \sum_{l \neq i}
            (\partial_{\bi} u {T^l_{jl}} + \partial_{\bl} u {T^i_{lj}}) \mu_{i\bj}\Big).
      \end{aligned}
\end{equation}
It follows that (see \cite{TWv13b})
\[ \f{1}{(n-2)!} * \fRe\{\sqrt{-1} \partial u \wedge \bpartial \omega^{n-2}\}
=\sqrt{-1} \widetilde{E}_{i\bj} dz_i \wedge d \bz_j\]
where
\begin{equation}
\label{gn-G50}
 \widetilde{E}_{i\bi} = \frac{1}{2}
    \sum_{p, l \neq i} (\partial_p u \ol{T_{pl}^{l}} + \partial_{\bp} u {T_{pl}^{l}}),
\end{equation}
and
\[   \widetilde{E}_{i\bj} = - \frac{1}{2}
   \Big(\sum_{l \neq i} (\partial_i u \ol{T^l_{jl}} + \partial_l u \ol{T^i_{lj}})
   + \sum_{l \neq j} (\partial_{\bj} u {T^l_{il}} + \partial_{\bl} u {T^j_{li}})\Big), \;\; i \neq j. \]
Clearly $\widetilde{E}_{i\bj}$ does not contain $\partial_j u$. Therefore,
as $\tilde{\chi}_{i\bj \bk} = \tilde{\chi}_{i\bj,  \bk} - \ol{\Gamma_{kj}^l} \tilde{\chi}_{i\bl}$ and $\Gamma_{kk}^l = 0$,
\begin{equation}
\label{gn-G60}
\tilde{\chi}_{i\bj,\zeta_j} = c \, \widetilde{E}_{i\bj, \zeta_{j}} = 0,
\;\;  \tilde{\chi}_{i\bi \bi, \zeta_i} = \tilde{\chi}_{i\bi, \zeta_i\bi} = c \, \widetilde{E}_{i\bi, \zeta_i\bi} = 0, \;
\forall \, i, j.
\end{equation}


Consider
\[ F (\sqrt{-1} \partial \bpartial u + \chi)
      = \log \rho_{n-1} (\lambda (\sqrt{-1} \partial \bpartial u + \chi)). \]
Denote $\lambda_i = \fg_{i\bi}$ and
$\eta_j = \lambda_1 + \cdots
   + \widehat{\lambda}_j + \cdots +\lambda_n$ at $p$.
Then
\begin{equation}
\label{gn-G70}
F^{i\bi} = \sum_{j \neq i} \frac{1}{\eta_j}.
  \end{equation}
As $\lambda_1 \geq \cdots \geq \lambda_n$, we have  $
F^{1\bar{1}} \leq \cdots \leq F^{n\bn}$ and
\[ F^{k\bk} \geq \frac{1}{\eta_1} 
   \geq \frac{1}{n-1}F^{k\bk}, \;\; \forall \, k \geq 2. \]
Next,
\begin{equation}
\label{gn-G80}
\begin{aligned}
\sum F^{i\bi} \chi_{i\bi \bar{1}, \zeta_1}
   = \,& \sum_i \sum_{j \neq i} \frac{\chi_{i\bi \bar{1}, \zeta_1}}{\eta_j}
          = \sum_j \sum_{i \neq j} \frac{\chi_{i\bi \bar{1},\zeta_1}}{\eta_j} \\
   = \,& 
       \sum_{j} \frac{ \tilde{\chi}_{j\bj \bar{1},\zeta_1}}{\eta_j}
   =   \sum_{j \neq 1} \frac{ \tilde{\chi}_{j\bj \bar{1},\zeta_1}}{\eta_j}
\end{aligned}
\end{equation}
since $\tilde{\chi}_{1\bar{1} \bar{1},\zeta_1}
    = c \, \widetilde{E}_{1\bar{1} \bar{1},\zeta_1} = 0$ by \eqref{gn-G60}.
Finally, from \eqref{gn-G70} and \eqref{gn-G80} we derive
\begin{equation}
\label{gn-G90}
|\sum F^{i\bi} \chi_{i\bi \bar{1}, \zeta_1}|  \leq C \sum_{j \neq 1} \frac{1}{\eta_j}
   = C F^{1\bar{1}}.
\end{equation}
Thus \eqref{A5} is verified.

\bigskip

\small

\end{document}